\theoremstyle{plain}
\newtheorem{theorem}{Theorem}
\newtheorem{lemma}{Lemma}[section]
\newtheorem{claim}{Claim}
\newtheorem{case}{Case}
\newtheorem{subcase}{Subcase}[case]
\newtheorem{fact}{Fact}
 \renewenvironment{proof}{
 \noindent{\bf Proof.}\rm} {\mbox{}\hfill\rule{0.5em}{0.809em}\par}
\begin{document}
\date{}
\title{Gallai-Ramsey numbers for monochromatic $K_4^{+}$ or $K_{3}$
}
\author{\small Xueli Su, Yan Liu\\
\small School of Mathematical Sciences, South China Normal University,\\
 \small Guangzhou, 510631, P.R. China  \thanks{This work is supported
by the Scientific Research Fund of the Science and Technology Program of Guangzhou, China (authorized in 2019) and by the Qinghai Province Natural Science Foundation£¨No.2020-ZJ-924).
 Correspondence should be addressed to Yan
Liu(e-mail:liuyan@scnu.edu.cn)}}

\maketitle

\setcounter{theorem}{0}

\begin{abstract}
A Gallai $k$-coloring is a $k$-edge coloring of a complete graph in which
there are no rainbow triangles. For two given graphs $H, G$
and two positive integers $k,s$ with that $s\leq k$, the
$k$-colored Gallai-Ramsey number $gr_{k}(K_{3}: s\cdot H,~
(k-s)\cdot G)$ is the minimum integer $n$ such that every Gallai
$k$-colored $K_{n}$ contains a monochromatic copy of $H$ colored
by one of the first $s$ colors or a monochromatic copy of $G$
colored by one of the remaining $k-s$ colors. In this paper, we
determine the value of Gallai-Ramsey number in the case that
$H=K_{4}^{+}$ and $G=K_{3}$. Thus the Gallai-Ramsey number
$gr_{k}(K_{3}: K_{4}^{+})$ is obtained.

\noindent {\bf Key words:} Gallai coloring, Rainbow triangle, Gallai-Ramsey number, Gallai partition.
\end{abstract}

\vspace{4mm}
\section{Introduction}

All graphs considered in this paper are finite, simple and undirected. For a graph $G$, we use $|G|$ to denote the number of vertices of $G$, say the \emph{order} of $G$.
The complete graph of order $n$ is denoted by $K_{n}$.
For a subset $S\subseteq V(G)$, let $G[S]$ be the subgraph of $G$ induced by $S$.
For two disjoint subsets $A$ and $B$ of $V(G)$, $E_{G}(A, B)=\{ab\in E(G) ~|~ a\in A, b\in B\}$. Let $G_1=(V_1,E_1)$ and $G_2=(V_2,E_2)$ be two graphs. The \emph{union} of $G_1$ and $G_2$, denoted by $G_{1}+G_{2}$, is the graph with the vertex set $V_1\bigcup V_2$ and the edge set $E_1\bigcup E_2$. The \emph{join} of $G_1$ and $G_2$, denoted by $G_1\vee G_2$, is the graph obtained from $G_1+G_2$ by adding all edges joining each vertex of $G_1$ and each vertex of $G_2$.
For any positive integer $k$, we write $[k]$ for the set $\{1, 2, \cdots, k\}$.
An edge coloring of a graph is called \emph{monochromatic} if all edges are colored by the same color. An edge-colored graph is called \emph{rainbow} if no two edges are colored by the same color.

Given graphs $H_{1}$ and $H_{2}$, the classical Ramsey number $R(H_{1}, H_{2})$ is the smallest integer $n$ such that for any 2-edge coloring of $K_{n}$ with red and blue, there exists a red copy of $H_{1}$ or a blue copy of $H_{2}$.
A \emph{sharpness example} of the Ramsey number $R(H_{1}, H_{2})$, denoted by $C_{(H_{1}, H_{2})}$, is a 2-edge colored $K_{R(H_{1}, H_{2})-1}$ with red and blue such that there are neither red copies of $H_{1}$ nor blue copies of $H_{2}$. Given graphs $H_{1}, H_{2}, \cdots, H_{k}$, the multicolor Ramsey number $R(H_{1}, H_{2}, \cdots, H_{k})$ is the smallest positive integer $n$ such that for every $k$-edge colored $K_{n}$ with the color set $[k]$, there exists some $i\in [k]$ such that $K_{n}$ contains a monochromatic copy of $H_{i}$ colored by $i$.
The multicolor Ramsey number is an obvious generalization of the classical Ramsey number.
When $H=H_{1}=\cdots=H_{k}$, we simply denote $R(H_{1}, \cdots, H_{k})$ by $R_{k}(H)$. The problem about computing Ramsey numbers is notoriously difficult.
For more information on classical Ramsey number, we refer the readers to ~\cite{PRCRT, RRA, RBJR}.
In this paper, we study Ramsey number in Gallai-coloring. A \emph{Gallai-coloring} is an edge coloring of a complete graph with no rainbow triangle. Gallai-coloring naturally arises in several areas including: information theory \cite{JG}; the study of partially ordered sets, as in Gallai's original paper \cite{Gallai} (his result was restated in \cite{GyarfasSimonyi} in the terminology of graphs); and the study of perfect graphs \cite{KJLA}. More information on this topic can be found in \cite{JAJT, SCKR}.
A Gallai $k$-coloring is a Gallai-coloring that uses at most $k$ colors.
Given a positive integer $k$ and graphs $H_{1}, H_{2}, \cdots, H_{k}$, the \emph{Gallai-Ramsey number} $gr_{k}(K_{3}: H_{1}, H_{2}, \cdots, H_{k})$ is the smallest integer $n$ such that every Gallai $k$-colored $K_{n}$ contains a monochromatic copy of $H_{i}$ in color $i$ for some $i\in [k]$.
Clearly, $gr_{k}(K_{3}: H_{1}, H_{2}, \cdots, H_{k})\leq R(H_{1}, H_{2}, \cdots, H_{k})$ for any $k$ and $gr_{2}(K_{3}: H_{1}, H_{2})=R(H_{1}, H_{2})$.
When $H=H_{1}=\cdots=H_{k}$, we simply denote $gr_{k}(K_{3}: H_{1}, H_{2}, \cdots, H_{k})$ by $gr_{k}(K_{3}: H)$.
When $H=H_{1}=\cdots=H_{s} (0\leq s\leq k)$ and $G=H_{s+1}=\cdots=H_{k}$, we use the following shorthand notation $$gr_{k}(K_{3}:s\cdot H, (k-s)\cdot G)=gr_{k}(K_{3}:\underbrace{H, \cdots, H}_{s ~\text{times}}, \underbrace{G, \cdots, G}_{(k-s) ~\text{times}}).$$

The authors in \cite{HColton} and \cite{WMG} determined the Gallai-Ramsey number $gr_{k}(K_{3}:s\cdot K_{4}, (k-s)\cdot K_{3})$ and $gr_{k}(K_{3}:s\cdot K_{3}, (k-s)\cdot C_{4})$, respectively. In this paper, we investigate the Gallai-Ramsey number $gr_{k}(K_{3}:s\cdot K_{4}^{+}, (k-s)\cdot K_{3}),$ where $K_{4}^{+}=K_{1}\vee(K_{3}+K_{1})$.
We will prove the following result in Section 2.

\begin{theorem}\label{Thm:K4+K3}
Let $k$ be a positive integer and $s$ an integer such that $0\leq s\leq k$. Then
$$
gr_k(K_{3}:s\cdot K_{4}^{+}, (k-s)\cdot K_{3})=\begin{cases}
17^{\frac{s}{2}}\cdot 5^{\frac{k-s}{2}}+1, &\text{if $s$ and $(k-s)$ are both even,}\\
2\cdot17^{\frac{s}{2}}\cdot 5^{\frac{k-s-1}{2}}+1, &\text{if $s$ is even and $(k-s)$ is odd,}\\
4\cdot 17^{\frac{k-1}{2}}+1, &\text{if $s=k$ and $k$ is odd,}\\
8\cdot17^{\frac{s-1}{2}}\cdot 5^{\frac{k-s-1}{2}}+1, &\text{if $s$ and $(k-s)$ are both odd,}\\
16\cdot17^{\frac{s-1}{2}}\cdot 5^{\frac{k-s-2}{2}}+1, &\text{if $s<k$ and $s$ is odd and $(k-s)$ is even.}
\end{cases}
$$
\end{theorem}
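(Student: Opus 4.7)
The proof splits into matching lower and upper bounds. The lower bound comes from explicit Gallai colorings, while the upper bound is proved by induction on $k$ using the Gallai partition theorem together with the three classical Ramsey numbers $R(K_3,K_3)=6$, $R(K_4^{+},K_3)=9$ and $R(K_4^{+},K_4^{+})=18$.

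For the lower bound I will construct, in each of the five parity patterns, a Gallai $k$-coloring of a complete graph on one fewer vertex than the claimed value, containing no monochromatic $K_4^{+}$ in any of the first $s$ colors and no monochromatic $K_3$ in any of the last $k-s$ colors. The construction is recursive, decreasing the number of colors by one or two at a time. The two-color base pieces are the sharp 2-colorings $C_{(K_3,K_3)}$ on $K_5$, $C_{(K_4^{+},K_3)}$ on $K_8$, and $C_{(K_4^{+},K_4^{+})}$ on $K_{17}$; the one-color base pieces are the monochromatic edge $K_2$ (which is $K_3$-free) and the monochromatic $K_4$ (which is $K_4^{+}$-free). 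Starting from a suitable base, blow up every vertex by a smaller extremal Gallai-coloring built inductively with the remaining colors. Since the blow-up introduces only two (or one) new colors between distinct blobs while each blob is itself Gallai, no rainbow triangle can arise, and the five clauses of the theorem correspond exactly to the five parity-compatible choices of final base.

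For the upper bound, I induct on $k$. The cases $k\leq 2$ reduce directly to the three classical Ramsey numbers above. In the inductive step, let $K_n$ carry a Gallai $k$-coloring with $n$ equal to the stated value, and apply the Gallai partition theorem to obtain a partition $V(K_n)=V_1\cup\cdots\cup V_m$ with $m\geq 2$ such that edges between distinct $V_i$'s are monochromatic and the reduced graph $R_m$ on the parts uses at most two colors, say red and blue. Distinguish three cases by the roles of red and blue in the palette: (A) both forbid $K_3$, forcing $m\leq 5$; (B) one forbids $K_4^{+}$ and the other forbids $K_3$, forcing $m\leq 8$; (C) both forbid $K_4^{+}$, forcing $m\leq 17$. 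Sharpen each case by the structural remark that a red $K_4$ in $R_m$ adjacent to any blob of size $\geq 2$ already yields a red $K_4^{+}$ in $K_n$ (the duplicated blob vertex plays the role of the pendant). Applying the induction hypothesis to each $V_i$ with red and blue deleted from the palette, and summing the part sizes, yields the claimed bound after an arithmetic check in each parity regime.

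The main obstacle is case (B): the reduced graph there may carry a red triangle or even a red $K_4$ without immediately producing a monochromatic $K_4^{+}$, so one must combine the pendant-vertex observation with the fine structure of the sharpness examples of $R(K_4^{+},K_3)=9$ to force either a very restricted reduced graph or very small individual blobs, and then balance these restrictions against the induction on $k-2$. Once those structural reductions are in place, the five-way case split of the theorem is dictated by whether the two colors removed at the current inductive step are both $K_4^{+}$-colors, both $K_3$-colors, or one of each; the remaining work is the routine arithmetic verification that the upper bound matches the construction.
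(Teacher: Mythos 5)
Your overall strategy coincides with the paper's: the same blow-up construction from the sharpness examples $C_{(K_3,K_3)}$, $C_{(K_4^{+},K_3)}$, $C_{(K_4^{+},K_4^{+})}$, $K_4$ and $K_2$ for the lower bound, and for the upper bound an induction via the Gallai partition with the same three-way split according to whether the two reduced-graph colors forbid $K_3$ or $K_4^{+}$. However, there is a genuine gap in your upper bound: the step ``applying the induction hypothesis to each $V_i$ with red and blue deleted from the palette'' is not justified. A part $H_i=G[V_i]$ may well contain red or blue edges; what the structure forces is only that no $H_i$ contains a red (or blue) $K_3$ when that color is among the first $s$ colors, and that an $H_i$ loses a color entirely only under extra conditions (e.g.\ when $h_i$ lies in a red triangle of the reduced graph). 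If a part retains red, the best you can do for it is the induction hypothesis with $s$ decreased by one but $k$ decreased by at most one (or not at all, when the part retains both colors), which gives the much weaker per-part bounds $f(k-1,s-1)$, $f(k-1,s-2)$ or $f(k,s-2)$ rather than $f(k-2,\cdot)$. The entire technical core of the paper --- the sets $I_r,I_b$ of parts retaining a red resp.\ blue edge, the claims $|I_r\cap I_b|\le 1$ and $|I_r|+|I_b|\le 4$, the degree bounds $d_r(h_i)\le 8$, and the weighted count with $p_0,p_1,p_2$ over many subcases on $l$ and $t$ --- exists precisely to limit how many parts can retain a color and to make the arithmetic close anyway. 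None of this appears in your sketch, and without it the summation over parts does not yield $f(k,s)$.

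Two smaller issues. First, inducting on $k$ alone is insufficient: several of the needed appeals to the induction hypothesis keep $k$ fixed and decrease only $s$ (for instance $gr_k(K_3:(s-1)\cdot K_4^{+},(k-s+1)\cdot K_3)$ in the two-part case, and $gr_k(K_3:(s-2)\cdot K_4^{+},(k-s+2)\cdot K_3)$ for a part containing both red and blue edges), so you must induct on $k+s$ or on the pair $(k,s)$. Second, your three cases (A)--(C) presuppose that two colors actually appear between the parts; the case $t=2$, where only one color crosses, needs a separate (easy but non-vacuous) argument, and one should also fix a partition with the minimum number of parts so that no part is joined monochromatically to everything else --- this minimality is what powers the key fact that a red $K_4$ inside a union of parts is impossible. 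Finally, you identify case (B) as the main obstacle, but in the paper the longest and most delicate analysis is the case where both reduced-graph colors forbid $K_4^{+}$ (up to $17$ parts, with the subcase $l\ge 11$ requiring a detailed examination of the red/blue degrees).
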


When $s=k$ and $s=0$, we can get the following Theorem~\ref{Thm:K4+} and Theorem~\ref{Thm:K3} from Theorem~\ref{Thm:K4+K3}, respectively. So Theorem~\ref{Thm:K4+} and Theorem~\ref{Thm:K3} can be seen as two corollaries obtained from Theorem~\ref{Thm:K4+K3}.
\begin{theorem}\label{Thm:K4+}
For a positive integer $k$,
$$
gr_k(K_{3}:K_{4}^{+})=\begin{cases}
17^{\frac{k}{2}}+1, &\text{if $k$ is even,}\\
4\cdot 17^{\frac{k-1}{2}}+1, &\text{if $k$ is odd.}
\end{cases}
$$
\end{theorem}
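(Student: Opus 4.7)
The plan is to derive Theorem~\ref{Thm:K4+} as a direct specialization of Theorem~\ref{Thm:K4+K3} at $s = k$. Note that $gr_k(K_3 : K_4^+)$ asks for a monochromatic $K_4^+$ in every one of the $k$ colors, so it coincides with the instance $gr_k(K_3 : k \cdot K_4^+, 0 \cdot K_3)$ of the more general Gallai-Ramsey number handled by Theorem~\ref{Thm:K4+K3}. The whole proof will therefore reduce to a bookkeeping exercise on the five-case piecewise formula.

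The first step I would carry out is to substitute $s = k$ (hence $k - s = 0$) into the piecewise expression and discard the vacuous branches. Since $k - s = 0$ is even and the strict inequality $s < k$ fails, the second case (which needs $k-s$ odd), the fourth case (which needs both $s$ and $k-s$ odd), and the fifth case (which needs $s < k$) cannot occur. Only the first and third cases can contribute.

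The second step is to read off the two surviving branches. If $k$ is even, then both $s = k$ and $k-s = 0$ are even, so the first case of Theorem~\ref{Thm:K4+K3} applies and evaluates to $17^{k/2} \cdot 5^{0} + 1 = 17^{k/2} + 1$. If $k$ is odd, then we are exactly in the situation ``$s = k$ and $k$ is odd'' of the third case, which evaluates to $4 \cdot 17^{(k-1)/2} + 1$. These two values match the two branches in the statement of Theorem~\ref{Thm:K4+}.

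Because Theorem~\ref{Thm:K4+K3} is assumed, there is no genuine obstacle to the proof: the only ``hard part'' is verifying that the parity case-splitting in the ambient theorem collapses correctly when $k - s = 0$, which is an immediate check. I would therefore present this as a short corollary argument rather than a standalone proof.
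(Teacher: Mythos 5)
Your proposal is correct and matches the paper exactly: the authors also obtain Theorem~\ref{Thm:K4+} as an immediate corollary of Theorem~\ref{Thm:K4+K3} by setting $s=k$, so that only the first branch (for $k$ even) and the third branch (for $k$ odd) of the piecewise formula survive. Nothing further is needed.
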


\begin{theorem}{\upshape \cite{Fre, AGAS}}\label{Thm:K3}
For a positive integer $k$,
$$
gr_k(K_{3}:K_{3})=\begin{cases}
5^{\frac{k}{2}}+1, &\text{if $k$ is even,}\\
2\cdot 5^{\frac{k-1}{2}}+1, &\text{if $k$ is odd.}
\end{cases}
$$
\end{theorem}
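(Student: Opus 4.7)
The plan is to establish the two matching bounds separately. Write $N_k$ for $5^{k/2}$ when $k$ is even and $2 \cdot 5^{(k-1)/2}$ when $k$ is odd, so that the goal is $gr_k(K_3:K_3) = N_k + 1$.

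For the lower bound I would exhibit a Gallai $k$-coloring of $K_{N_k}$ with no monochromatic triangle by iterated substitution. The basic building block is the $2$-coloring of $K_5$ in which each color class is a pentagon $C_5$, a Gallai $2$-coloring of $K_5$ with no monochromatic $K_3$ that realises $R(K_3, K_3) = 6$. Starting from this block one performs a blow-up step: replace each vertex of the already constructed Gallai-colored complete graph by five copies of the whole construction, color the edges \emph{between} two different copies by the appropriate pentagon color using two fresh colors, and retain the inherited coloring \emph{inside} each copy. A short check confirms that the blow-up preserves both the Gallai property and the absence of monochromatic triangles. For even $k$, starting from a single vertex and applying this blow-up $k/2$ times produces a Gallai $k$-colored $K_{5^{k/2}}$; for odd $k$, starting from $K_2$ colored with a single fresh color and applying the blow-up $(k-1)/2$ times produces a Gallai $k$-colored $K_{2 \cdot 5^{(k-1)/2}}$.

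For the upper bound I would proceed by strong induction on $k$, with base cases $k = 1$ giving $gr_1(K_3:K_3) = 3$ and $k = 2$ giving $gr_2(K_3:K_3) = R(K_3, K_3) = 6$, both matching the formula. For the inductive step, let $N = N_k + 1$ and suppose for contradiction that some Gallai $k$-coloring of $K_N$ contains no monochromatic $K_3$. I would apply Gallai's structure theorem to obtain a Gallai partition $R_1, R_2, \ldots, R_t$ of $V(K_N)$ whose \emph{reduced} complete graph on $t$ vertices is edge-colored with at most two colors, say $1$ and $2$. A monochromatic triangle in the reduced graph would lift, by choosing one vertex per class, to a monochromatic triangle in $K_N$; hence the reduced graph has no monochromatic triangle and $t \leq R(K_3, K_3) - 1 = 5$. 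Moreover, if vertex $i$ of the reduced graph has a neighbor in color $c \in \{1, 2\}$, then color $c$ cannot be used on any edge inside $R_i$, since such an internal edge together with any vertex of that neighbor would form a monochromatic triangle of color $c$.

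The argument is then completed by a short case analysis on $t \in \{2, 3, 4, 5\}$. The decisive case is $t = 5$: the only $2$-coloring of $K_5$ without a monochromatic triangle has each color class equal to a $C_5$, so every reduced vertex has both a color-$1$ and a color-$2$ neighbor, forcing each $R_i$ to use at most $k - 2$ colors. The induction hypothesis then gives $|R_i| \leq gr_{k-2}(K_3:K_3) - 1$, and summing yields $N \leq 5 \bigl(gr_{k-2}(K_3:K_3) - 1\bigr)$, which equals $N_k$ upon substituting the inductive formula and contradicts $N = N_k + 1$. The cases $t \in \{2, 3, 4\}$ are handled analogously: a careful accounting of which reduced vertices are isolated in a given color shows that at most two of the parts can use an extra color internally, and a direct computation confirms that the resulting upper bounds on $N$ are strictly smaller than $N_k + 1$. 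The main obstacle is the bookkeeping in these smaller-$t$ cases, particularly at $t = 4$ where several non-isomorphic $2$-colorings of $K_4$ must be examined; however, in each of them every reduced vertex has positive degree in both colors, so the uniform bound $N \leq 4 \bigl(gr_{k-2}(K_3:K_3) - 1\bigr)$ applies and suffices to close the induction.
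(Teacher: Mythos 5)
Your proposal is correct, but note that the paper itself gives no proof of this statement: Theorem~\ref{Thm:K3} is imported from the literature (\cite{Fre, AGAS}) and is used only as an ingredient (the base case $s=0$) in the induction for Theorem~\ref{Thm:K4+K3}. Your argument is the standard one and is essentially the same machinery the paper deploys for its main theorem: the lower bound by iterated blow-ups of the two-$C_5$ sharpness coloring of $K_5$, and the upper bound by induction through a Gallai partition whose reduced graph has $t\le R(K_3,K_3)-1=5$ parts. Your key computations check out, including the tight case $t=2$ with $k$ odd, where $2\bigl(gr_{k-1}(K_3:K_3)-1\bigr)=2\cdot 5^{(k-1)/2}=N_k<N_k+1$; the only imprecision is the blanket claim that every reduced vertex has positive degree in both colors, which holds for $t\in\{4,5\}$ but fails for $t=3$ (one vertex of a triangle-free $2$-colored $K_3$ sees only one color, so its part may use $k-1$ colors) --- though, as you indicate, the resulting bound $N_{k-1}+2N_{k-2}<N_k$ still closes that case. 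The paper sidesteps such small-$t$ bookkeeping by always choosing a Gallai partition with the minimum number of parts, which forces $t\ne 3$ and gives Fact~\ref{fact:dwi} for free; you could adopt the same convention to streamline your case analysis.
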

To prove Theorem~\ref{Thm:K4+K3}, the following theorem is useful.

\begin{theorem}{\upshape \cite{Gallai, GyarfasSimonyi, CameronEdmonds}} {\upshape (Gallai-partition)}\label{Thm:G-Part}
For any Gallai-coloring of a complete graph $G$, there exists a partition of $V(G)$ into at least two parts such that there are at most two colors on the edges between the parts and only one color on the edges between each pair of parts. The partition is called a \emph{Gallai-partition}.
\end{theorem}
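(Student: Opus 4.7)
The plan is to prove Theorem~\ref{Thm:G-Part} via a coarsest-partition argument combined with a structural analysis of the resulting reduced graph. Call a partition $\mathcal{P} = \{V_1, \ldots, V_m\}$ of $V(G)$ with $m \geq 2$ \emph{nice} if, for every $i \neq j$, all edges of $E_G(V_i, V_j)$ share a single color. Note that the partition into singletons is already nice, so the family of nice partitions is nonempty. The goal is then to upgrade a nice partition to one in which the colors appearing between parts total at most two.

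Among all nice partitions of $V(G)$, fix one, $\mathcal{P}^* = \{V_1, \ldots, V_m\}$, with the fewest parts. Define the \emph{reduced graph} $R$ on vertex set $[m]$ where the edge $ij$ carries the common color of $E_G(V_i, V_j)$. One checks immediately that $R$ inherits the Gallai property: any rainbow triangle in $R$ on indices $\{i, j, k\}$ would, upon picking one vertex from each of $V_i, V_j, V_k$, lift to a rainbow triangle in $G$. Moreover, the minimality of $m$ ensures that $R$ admits no non-trivial nice partition of its own, since such a partition could be pulled back to a nice partition of $G$ with strictly fewer parts.

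The proof then reduces to the core claim: any Gallai-colored complete graph $R$ that admits no non-trivial nice partition uses at most two colors. I would prove this by fixing any vertex $v \in V(R)$ and analyzing its color-neighborhoods $N_c(v)$ as $c$ ranges over the colors incident to $v$. The Gallai condition on triangles through $v$ gives: for $u \in N_a(v)$ and $w \in N_b(v)$ with $a \neq b$, the edge $uw$ must be colored $a$ or $b$. Exploiting this repeatedly, one shows that if at least three distinct colors appear at $v$, then an appropriate union of color-neighborhoods forms a \emph{module} (a set $M \subseteq V(R)$ such that, for each vertex outside $M$, the edges from that vertex into $M$ are monochromatic). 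A non-trivial module yields a non-trivial nice partition of $R$, contradicting its primeness. Hence at most two colors occur at $v$; propagating this bound to all of $V(R)$ via the connectivity of the color classes completes the argument.

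The main obstacle is the module-finding step. One must identify precisely which union of color-neighborhoods at $v$ yields a module, and then verify that every ``crossing'' edge is monochromatic by invoking the Gallai condition across many triangles simultaneously. Classical treatments dispatch this either through the modular decomposition of comparability graphs (Gallai; Gy\'arf\'as--Simonyi) or through a more direct combinatorial argument (Cameron--Edmonds); for a self-contained development, I would follow the latter, tailoring the module construction to the color structure at $v$. The rest of the theorem then falls out from the reduction above.
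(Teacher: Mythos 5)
This theorem is one the paper imports as a known result (citing Gallai, Gy\'arf\'as--Simonyi, and Cameron--Edmonds) and never proves, so there is no in-paper argument to compare against; your proposal has to stand on its own, and as written it does not. The first half is fine and is the standard reduction: the singleton partition is nice, a nice partition with fewest parts exists, its reduced graph $R$ is again Gallai-colored, and minimality forces $R$ to be prime (no nontrivial module, equivalently no nice partition into fewer parts), since any such structure would pull back to a smaller nice partition of $G$. But this reduction only repackages the theorem; the entire content now sits in the claim that a prime Gallai-colored complete graph uses at most two colors, and that is exactly the step you do not carry out. You say that if three colors meet at a vertex $v$ then ``an appropriate union of color-neighborhoods forms a module,'' but you never say which union, and the obvious candidates fail: for $u\in N_a(v)$, the edges from a vertex $w\in N_b(v)$ into $N_a(v)$ are only constrained to lie in $\{a,b\}$, so $N_a(v)$ itself need not be a module, and identifying the right set requires the genuinely nontrivial argument that is the whole point of the Cameron--Edmonds/Gy\'arf\'as--Simonyi proofs. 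Deferring it to ``classical treatments'' leaves the theorem unproved.

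The closing step is also unjustified as stated. From ``at most two colors occur at each vertex'' you cannot conclude ``at most two colors occur in $R$'' without further work: color the $K_4$ on $\{a,b,c,d\}$ with $ab$ in color $1$, $cd$ in color $3$, and the four crossing edges in color $2$; this is a Gallai coloring in which every vertex meets only two colors yet three colors appear. Primeness rules this particular example out (here $\{a,b\}$ is a module), but that means the propagation step must itself invoke primeness through some concrete argument, and ``connectivity of the color classes'' is not one. So the proposal is a correct framing of the problem plus an accurate map of where the difficulty lies, but the two essential steps --- constructing the module when three colors meet at a vertex, and globalizing the two-color bound --- are both missing.
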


Given a Gallai-partition $(V_{1}, V_{2}, \cdots, V_{t})$ of a Gallai-colored complete graph $G$, let $H_{i}=G[V_{i}]$, $h_{i}\in V_{i}$ for each $i\in [t]$ and $R=G\left[\{h_{1}, h_{2}, \cdots, h_{t}\}\right]$.
Then $R$ is said to be the \emph{reduced graph} of $G$ corresponding to the given Gallai-partition. By Theorem~\ref{Thm:G-Part}, all edges in the reduced graph $R$ are colored at most two colors.

\section{Proof of Theorem~\ref{Thm:K4+K3}}

First, recall some known classical Ramsey numbers of $K_{4}^{+}$ and $K_{3}$.

\begin{lemma}{\upshape \cite{Clm, HHIM, SPR}}\label{Lem:k4+K3}
\begin{align*}
R(K_{3}, K_{3})=6,~~~  R(K_{4}^{+}, K_{3})=R(K_{3}, K_{4}^{+})=9,~~~ R(K_{4}^{+}, K_{4}^{+})=18.
\end{align*}
\end{lemma}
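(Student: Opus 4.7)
All three values are classical, and my plan is to derive each by bootstrapping from the smaller known Ramsey numbers $R(K_3,K_3)$, $R(K_4,K_3)$, and $R(K_4,K_4)$, then handling the single extra pendant vertex needed to promote $K_4$ to $K_4^+$. The identity $R(K_3,K_3)=6$ is the textbook pigeonhole argument at a vertex of $K_6$ (five incident edges force a color class of size at least three, whose three endpoints either form a monochromatic triangle in that color or else give a triangle in the opposite color), with extremal example the $K_5$ whose two color classes are the edge-disjoint Hamilton $5$-cycles.

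For $R(K_4^+,K_3)=9$, since $K_4\subseteq K_4^+$ the lower bound is inherited from $R(K_4,K_3)=9$. For the upper bound, given a $2$-coloring of $K_9$ with no blue $K_3$, by $R(K_4,K_3)=9$ there is a red $K_4$ on some set $A$. If any of the other five vertices sends a red edge to $A$, attaching it gives a red $K_4^+$; otherwise all five vertices are blue to $A$, and then every edge among them must be red (else a blue triangle with a vertex of $A$), producing a red $K_5\supseteq K_4^+$. The symmetric identity $R(K_3,K_4^+)=9$ follows from the definition.

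For $R(K_4^+,K_4^+)=18$, the Paley graph $P_{17}$ is self-complementary and $K_4$-free, hence $K_4^+$-free, supplying the lower bound. For the upper bound, in any $2$-coloring of $K_{18}$, $R(K_4,K_4)=18$ yields a monochromatic $K_4$; without loss of generality red on some set $A$. Let $V'$ denote the other $14$ vertices. If any $v\in V'$ sends a red edge to $A$ we are done, so assume all $A$--$V'$ edges are blue. Since $|V'|=14\ge R(K_4,K_3)=9$, we find either a blue $K_3$ or a red $K_4$ inside $V'$. In the first case, a blue $K_3$ $\{u,v,w\}\subseteq V'$ together with $a,b\in A$ gives a blue $K_4$ on $\{a,u,v,w\}$ with pendant edge $bu$, hence a blue $K_4^+$. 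In the second case, a red $K_4$ on some $B\subseteq V'$ leaves two disjoint red $K_4$'s joined entirely in blue. If any of the remaining $10$ vertices has a red edge to $A$ or $B$ we are done; otherwise they are blue to all of $A\cup B$, and then any blue edge $v_1v_2$ among these $10$ gives a blue $K_4$ on $\{a,p,v_1,v_2\}$ (with $a\in A$, $p\in B$) extended by the blue pendant edge $bv_1$ (where $b\in A\setminus\{a\}$), while if all edges among the $10$ are red we obtain a red $K_{10}\supseteq K_4^+$.

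The main obstacle is the bookkeeping in the last case: several coupled blue neighborhoods must be tracked simultaneously to ensure no configuration is missed, and one must also verify that the displayed pendant extensions never collide (the choice $b\neq a$ requires $|A|\ge 2$, which is why the $K_4$ hypothesis is used and not merely $K_3$). The arguments themselves are elementary, and the numerical values as stated are standard results recorded in \cite{Clm, HHIM, SPR}.
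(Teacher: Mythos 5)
Your proposal is correct, but note that the paper does not prove this lemma at all: it is stated as a known result and attributed to the references \cite{Clm, HHIM, SPR}, so there is no in-paper argument to compare against. What you supply is a self-contained derivation modulo the classical values $R(K_4,K_3)=9$ and $R(K_4,K_4)=18$ (which you assume rather than prove, just as the paper assumes the $K_4^+$ values). Your reductions check out: the lower bounds transfer because $K_4\subseteq K_4^+$, so the sharpness colorings for $(K_4,K_3)$ and the self-complementary $K_4$-free Paley graph $P_{17}$ also avoid $K_4^+$; and the upper-bound arguments correctly exploit that $K_4^+=K_1\vee(K_3+K_1)$ is just a $K_4$ with one pendant vertex, so a monochromatic $K_4$ either extends by a single like-colored edge to the outside or forces all outside edges into the opposite color, after which the case analysis (blue $K_3$ in $V'$, or a second red $K_4$, or a red $K_{10}$) closes every branch; I verified in particular that the pendant vertex $b\in A\setminus\{a\}$ never needs the red edge $ab$, since $K_4^+$ is sought as a subgraph rather than an induced subgraph. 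The trade-off is clear: the paper buys brevity by citation, while your argument makes the lemma verifiable from more elementary facts at the cost of a page of case analysis; either is acceptable here, though if you wanted full rigor you would still owe proofs (or citations) for $R(K_4,K_3)$ and $R(K_4,K_4)$ themselves.
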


For the sake of notation, now we define functions $f_{i}$ for $i\in [5]$ as follows.
\begin{align*}
f_{1}(k, s)&=17^{\frac{s}{2}}\cdot 5^{\frac{k-s}{2}},\\
f_{2}(k, s)&=2\cdot17^{\frac{s}{2}}\cdot 5^{\frac{k-s-1}{2}},\\
f_{3}(k, s)&=4\cdot 17^{\frac{k-1}{2}},\\
f_{4}(k, s)&=8\cdot17^{\frac{s-1}{2}}\cdot 5^{\frac{k-s-1}{2}},\\
f_{5}(k, s)&=16\cdot17^{\frac{s-1}{2}}\cdot 5^{\frac{k-s-2}{2}}.
\end{align*}
Let $$
f(k, s)=\begin{cases}
f_{1}(k, s), &\text{if $s$ and $(k-s)$ are both even,}\\
f_{2}(k, s), &\text{if $s$ is even and $(k-s)$ is odd,}\\
f_{3}(k, s), &\text{if $s=k$ and $k$ is odd,}\\
f_{4}(k, s), &\text{if $s$ and $(k-s)$ are both odd,}\\
f_{5}(k, s), &\text{if $s<k$, $s$ is odd and $(k-s)$ is even.}
\end{cases}
$$
It is easy to check that

\begin{eqnarray*}
2f(k-1, s)
&=&\begin{cases}
2f_{2}(k-1, s), &\text{if $s$ and $(k-s)$ are both even,}\\
2f_{1}(k-1, s), &\text{if $s$ is even and $(k-s)$ is odd,}\\
2f_{3}(k-1, s), &\text{if $s=k-1$ and $k$ is even,}    \ \ \ \ \ \ \ \ \ \ \ \ \ \ \ \ \ \ \ \ \ \ \ \ \ \ \ \ \ \ \ \ \ \ \ \ \ \ \ \ (1)\\
2f_{5}(k-1, s), &\text{if $s<k-1$, $s$ and $(k-s)$ are both odd,}\\
2f_{4}(k-1, s), &\text{if $s$ is odd and $(k-s)$ is even}\\
\end{cases}\\
&\leq &f(k, s),
\end{eqnarray*}

\begin{eqnarray*}
5f(k-2, s)
&=&\begin{cases}
5f_{1}(k-2, s), &\text{if $s$ and $(k-s)$ are both even,}\\
5f_{2}(k-2, s), &\text{if $s$ is even and $(k-s)$ is odd,}\\
5f_{3}(k-2, s), &\text{if $s=k-2$ and $k$ is odd,}   \ \ \ \ \ \ \ \ \ \ \ \ \ \ \ \ \ \ \ \ \ \ \ \ \ \ \ \ \ \ \ \ \ \ \ \ \ \ \ \ \ \ \ \ \ \ \ \ \ \ (2)\\
5f_{4}(k-2, s), &\text{if $s$ and $(k-s)$ are both odd,}\\
5f_{5}(k-2, s), &\text{if $s<k-2$ and $s$ is odd and $(k-s)$ is even}\\
\end{cases}\\
&\leq &f(k, s),
\end{eqnarray*}
and the following inequations hold.
\begin{equation*}
f(k-1, s-1)\le \frac{5}{16}f(k, s),
\tag{3}
\end{equation*}
 \begin{equation*}
f(k-2, s-1)\le \frac{1}{8}f(k, s),
\tag{4}
\end{equation*}
\begin{equation*}
f(k, s-1)+f(k-1, s-1)\leq f(k, s).
\tag{5}
\end{equation*}

Now we prove Theorem~\ref{Thm:K4+K3}.

\begin{proof}
We first prove that $gr_k(K_{3}:s\cdot K_{4}^{+}, (k-s)\cdot K_{3})\geq f(k, s)+1$ by constructing a Gallai $k$-colored complete graph with order $f(k, s)$ which contains neither monochromatic copy of $K_{4}^{+}$ colored by one of the first $s$ colors nor monochromatic copy of $K_{3}$ colored by one of the remaining $k-s$ colors.
For this construction, we use the sharpness example of classical Ramsey results.
Let $Q_{1}=C_{(K_{3}, K_{3})}$, $Q_{2}=C_{(K_{4}^{+}, K_{4}^{+})}$ and $Q_{3}=C_{(K_{4}^{+}, K_{3})}$.
Then by Lemma~\ref{Lem:k4+K3}, $Q_{1}$ is a 2-edge colored $K_{5}$, $Q_{2}$ is a 2-edge colored $K_{17}$ and $Q_{3}$ is a 2-edge colored $K_{8}$.
We construct our sharpness example by taking blow-ups of these sharpness examples $Q_{i}~ (i\in\{1, 2, 3\})$. A \emph{blow-up} of an edge-colored graph $G$ on a graph $H$ is a new graph obtained from $G$ by replacing each vertex of $G$ with $H$ and replacing each edge $e$ of $G$ with a monochromatic complete bipartite graph $(V(H), V(H))$ in the same color with $e$.
By induction, suppose that we have constructed $G_{0}, G_{1}, \cdots, G_{i}$, where $G_{0}$ be a single vertex and $G_{i}$ is colored by the color set $[i]$ such that $G_{i}$ has no monochromatic copy of $K_{4}^{+}$ colored by one of the first $s$ colors and no monochromatic copy of $K_{3}$ colored by one of the remaining $i-s$ colors.
If $i=k$, then the construction is completed.
Otherwise, we consider the following cases.

{\bf Case a.} If $i\leq s-2$, then construct $G_{i+2}$ by a blow-up of $Q_{2}$ colored by the two colors $i+1$ and $i+2$ on $G_{i}$. Then $G_{i+2}$ has no monochromatic copy $K_{4}^{+}$ in the first $i+2$ colors.

{\bf Case b.} If $i= s-1$ and $k=s$, then construct $G_{i+1}$ by a blow-up of $K_{4}$ colored by the color $i+1$ on $G_{i}$. Then $G_{i+1}$ has no monochromatic copy $K_{4}^{+}$ in the first $i+1$ colors.

{\bf Case c.} If $i= s-1$ and $k>s$, then construct $G_{i+2}$ by a blow-up of $Q_{3}$ colored by the two colors $i+1$ and $i+2$ on $G_{i}$. Then $G_{i+2}$ contains neither monochromatic copy of $K_{4}^{+}$ colored by one of the first $i+1$ colors nor monochromatic copy of $K_{3}$ colored by the color $i+2$.

{\bf Case d.} If $i\geq s$ and $i=k-1$, then construct $G_{i+1}$ by a blow-up of $K_{2}$ colored by the color $i+1$ on $G_{i}$. Then $G_{i+1}$ contains neither monochromatic copy of $K_{4}^{+}$ colored by one of the first $s$ colors nor monochromatic copy of $K_{3}$ colored by one of the remaining $k-s$ colors.

{\bf Case e.} If $i\geq s$ and $i\leq k-2$, then construct $G_{i+2}$ by a blow-up of $Q_{1}$ colored by the two colors $i+1$ and $i+2$ on $G_{i}$. Then $G_{i+2}$ contains neither monochromatic copy of $K_{4}^{+}$ colored by one of the first $s$ colors nor monochromatic copy of $K_{3}$ colored by one of the remaining $i+2-s$ colors.

By the above construction, it is clear that $G_{k}$ is Gallai $k$-colored and contains neither monochromatic copy of $K_{4}^{+}$ in any of the first $s$ colors nor monochromatic copy of $K_{3}$ in any of the remaining $k-s$ colors.
If $s$ and $k-s$ are both even, then by Case a, we can first construct $G_{s}$ of order $17^{\frac{s}{2}}$. Next by Case e, we continue to construct $G_{s+2}, G_{s+4}, \cdots, G_{k}$. So we can get $|G_{k}|=f_{1}(k, s)$.
If $s$ is even and $k-s$ is odd, then by Case a, we can first construct $G_{s}$ of order $17^{\frac{s}{2}}$. Next by Case e, we continue to construct $G_{s+2}, G_{s+4}, \cdots, G_{k-1}$ and we can get $|G_{k-1}|=17^{\frac{s}{2}}\cdot5^{\frac{k-s-1}{2}}$.
Finally by Case d, we can construct $G_{k}$ of order $f_{2}(k, s)$. Similarly, we can get that $|G_{k}|=f(k, s)$ in the remaining three cases.
Therefore, $$gr_k(K_{3}:s\cdot K_{4}^{+}, (k-s)\cdot K_{3})\geq f(k, s)+1.$$

Now we prove that $gr_k(K_{3}:s\cdot K_{4}^{+}, (k-s)\cdot
K_{3})\leq f(k, s)+1$ by induction on $k+s$. Let $n=f(k, s)+1$ and
$G$ be a Gallai $k$-colored complete graph of order $n$. The
statement is trivial in the case that $k=1$. The statement holds
in the case that $k=2$ by Lemma~\ref{Lem:k4+K3}. The statement
holds in the case that $s=0$ by Theorem~\ref{Thm:K3}. So we can
assume that $s\geq1$ and $k\geq3$, and the statement holds for any
$s^{'}$ and $k^{'}$ such that $s^{'}+k^{'}<s+k$. Then $f(k, s)\geq 16$ and $n\ge 17$.
Suppose, to the contrary, that $G$ contains neither monochromatic
copy of $K_{4}^{+}$ in any of the first $s$ colors nor
monochromatic copy of $K_{3}$ in any of the remaining $k-s$
colors. By Theorem~\ref{Thm:G-Part}, there exists a
Gallai-partition of $V(G)$. Choose a Gallai-partition with the
smallest number of parts, say $(V_{1},
V_{2}, \cdots, V_{t})$ and let $H_{i}=G[V_{i}]$ for $i\in [t]$. Then $t\ge 2$. Choose
one vertex $h_{i}\in V_{i}$ and set $R=G[\{h_{1}, h_{2}, \cdots,
h_{t}\}]$. Then $R$ is a reduced graph of $G$ colored by at most
two colors.

We first consider the case that $t=2$. W.L.O.G, suppose that the
color on the edges between two parts is red. If red is in the last
$k-s$ colors (so $s<k$), then $H_{1}$ and $H_{2}$ both have no red
edges, otherwise, there exists a red $K_{3}$, a contradiction.
Hence $|H_{i}|\leq gr_{k-1}(K_{3}:s\cdot K_{4}^{+}, (k-s-1)\cdot
K_{3})-1$ for each $i\in [2]$. By the induction hypothesis and
(1),
$$|G|=|H_{1}|+|H_{2}|\leq 2f(k-1, s)\leq f(k, s)<n,$$
a contradiction. If red is in the first $s$ colors (W.L.O.G,
suppose that red is the $s$th color), then $H_{1}$ has no red
edges or $H_{2}$ has no red edges, otherwise, there exists a red
$K_{4}^{+}$ in $G$, a contradiction. W.L.O.G, suppose that $H_{1}$
has no red edges. Then $|H_{1}|\leq gr_{k-1}(K_{3}:(s-1)\cdot
K_{4}^{+}, (k-s)\cdot K_{3})-1$. If $H_{2}$ has a red $K_{3}$,
then we can find a red $K_{4}^{+}$ in $G$, a contradiction. Hence
$H_{2}$ contains no monochromatic copy of $K_{4}^{+}$ in any of
the first $s-1$ colors and no monochromatic copy of $K_{3}$ in any
of the remaining $(k-s+1)$ colors. Hence $|H_{2}|\leq
gr_{k}(K_{3}:(s-1)\cdot K_{4}^{+}, (k-s+1)\cdot K_{3})-1$. By the
induction hypothesis and (5), we get that
$$|G|=|H_{1}|+|H_{2}|\leq f(k-1, s-1)+f(k, s-1)\leq f(k, s)<n,$$ a contradiction.

Then we can assume that $t\ge 3$ and since $t$ is smallest, $R$ is colored by exactly two
colors. Suppose that the two colors appeared in the
Gallai-partition $(V_{1}, V_{2}, \cdots, V_{t})$ are red and blue,
that is, the reduced graph $R$ is colored by red and blue. If the
edge $h_{i}h_{j}$ is red in $R$, then $h_{j}$ is said to be a red
neighbor of $h_{i}$. Let $N_{r}(h_{i})=\{$all red neighbors of
$h_{i}\}$, named the red neighborhood of $h_{i}$, and
$N_{b}(h_{i})$ be the blue neighborhood of $h_{i}$ symmetrically.
For each vertex $h_{i} \in V(R)$ $(i\in [t])$, let
$d_{r}(h_{i})=|N_{r}(h_{i})|$, named the red degree of $h_{i}$ in
$R$, and $d_{b}(h_{i})$ be the blue degree of $h_{i}$ in $R$. Then
$d_{r}(h_{i})+d_{b}(h_{i})=t-1$. If there exists one part (say
$V_{1}$) such that all edges joining $V_{1}$ to the other parts
are colored by the same color, then we can find a new
Gallai-partition with two parts $(V_{1}, V_{2}\bigcup\cdots
\bigcup V_{t})$, which contradicts with that $t$ is smallest. It
follows that $t\neq 3$ and the following fact holds.

\begin{fact}\label{fact:dwi}
For any $h_{i} \in V(R)$, we have that $d_{r}(h_{i})\geq1$ and $d_{b}(h_{i})\geq1$.
\end{fact}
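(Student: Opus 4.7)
The plan is to derive Fact~\ref{fact:dwi} as an immediate consequence of the minimality of $t$, using exactly the merging argument already sketched in the paragraph just before the fact's statement.

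First I would argue by contradiction: suppose some vertex $h_i \in V(R)$ has $d_r(h_i)=0$ (the case $d_b(h_i)=0$ is symmetric). Since the reduced graph $R$ uses only the colors red and blue, this would mean every edge $h_i h_j$ with $j\in[t]\setminus\{i\}$ is blue in $R$. By definition of the Gallai-partition, the color between any two parts is determined by the corresponding edge in $R$, so every edge of $G$ with one endpoint in $V_i$ and the other in $V_j$ (for $j\neq i$) is blue.

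Next I would exhibit a coarser Gallai-partition to produce the contradiction. Consider the partition of $V(G)$ into the two parts $V_i$ and $W := \bigcup_{j\neq i} V_j$. All edges of $E_G(V_i, W)$ are blue, so between the two parts only a single color appears; in particular, at most two colors appear on edges between parts, and only one color appears between each pair of parts. Hence $(V_i, W)$ is itself a Gallai-partition of $G$, with exactly $2$ parts. But we are working under the assumption $t \geq 3$ (and in fact $t\geq 4$, since $t\neq 3$ was just established), so this new partition has strictly fewer parts than $(V_1,\ldots,V_t)$, contradicting our choice of $(V_1,\ldots,V_t)$ as a Gallai-partition with the smallest number of parts. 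Therefore $d_r(h_i)\geq 1$ for every $h_i\in V(R)$, and the symmetric argument (swapping the roles of red and blue) yields $d_b(h_i)\geq 1$.

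There is essentially no obstacle here: the only thing to check carefully is that merging $V_2,\ldots,V_t$ into one part really yields a valid Gallai-partition, which is automatic because the single pair of new parts is joined by a single color. The content of the fact is just the contrapositive of the merging step used to rule out ``monochromatically dominated'' parts, and it will be the workhorse used later to guarantee that every part sees both colors in the reduced graph $R$.
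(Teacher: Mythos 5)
Your proof is correct and is essentially identical to the paper's own argument: the paper likewise rules out a part whose edges to all other parts are monochromatic by merging the remaining parts into one, obtaining a $2$-part Gallai-partition that contradicts the minimality of $t\ge 3$. The only cosmetic difference is that you invoke $t\ge 4$, whereas $t\ge 3$ already suffices for the contradiction (and $t\neq 3$ is itself a consequence of this same merging argument).
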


Now we can assume that $t\ge 4$. We consider the following three cases.

\begin{case}\label{case:s>=0}
Both red and blue are in the last $k-s$ colors (so $s\leq k-2$).
\end{case}

This means that there is neither red $K_{3}$ nor blue $K_{3}$ in
$G$. Since $R(K_{3}, K_{3})=6$, we have $t\leq5$. By
Fact~\ref{fact:dwi}, $H_{i}$ contains neither red edges nor blue
edges for each $i$. Then $|H_{i}|\leq gr_{k-2}(K_{3}:s\cdot
K_{4}^{+}, (k-s-2)\cdot K_{3})-1$ for $1\leq i\leq t$. By the
induction hypothesis and (2), we get that
\begin{eqnarray*}
|G|=\sum_{i=1}^{t}|H_{i}| \leq t\cdot f(k-2, s) \leq 5f(k-2,
s)\leq f(k, s) < n,
\end{eqnarray*}
a contradiction.

Now we only consider the case that red or blue are in the first
$s$ colors in the following. Then we have the following
Facts.

\begin{fact}\label{fact:red k4} If red is in the first
$s$ colors, then for any $p\in [t-1]$ and any $p$ parts $V_{j_{1}},
\cdots, V_{j_{p}}$, $G[V_{j_{1}}\cup\cdots\cup V_{j_{p}}]$ has no
red $K_{4}$, and the statement holds for blue symmetrically.
\end{fact}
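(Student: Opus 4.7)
The plan is to argue by contradiction: assume that $G[V_{j_{1}}\cup\cdots\cup V_{j_{p}}]$ contains a red $K_{4}$ on some vertices $u_{1},u_{2},u_{3},u_{4}$, and then either produce a red $K_{4}^{+}$ in $G$ (impossible because red is among the first $s$ colors) or exhibit a Gallai-partition of $G$ with fewer than $t$ parts (contradicting the minimality of $t$). Let $S\subseteq\{V_{j_{1}},\ldots,V_{j_{p}}\}$ be the set of parts that actually contain at least one $u_{i}$ and, for $V_{a}\in S$, let $c_{a}$ denote the number of $u_{i}$'s lying in $V_{a}$. Then $1\le |S|\le 4$ and $|S|\le p\le t-1$. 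Because any two $u_{i},u_{j}$ in distinct parts of $S$ are joined by a red edge, the reduced graph restricted to $S$ is a red clique.

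To produce a red $K_{4}^{+}$ it suffices to locate a fifth vertex having at least one red edge to $\{u_{1},u_{2},u_{3},u_{4}\}$. I would search in two places. First, inside the parts of $S$: if $|S|\ge 2$ and some $V_{a}\in S$ satisfies $|V_{a}|>c_{a}$, then any $w\in V_{a}\setminus\{u_{1},u_{2},u_{3},u_{4}\}$ is red-adjacent to every $u_{j}$ lying in a different part of $S$, because those edges lie between parts of $S$ and the corresponding $R$-edge is red. Second, outside $S$: if some part $V_{m}\notin S$ has a red $R$-edge to some $V_{a}\in S$, then any $v\in V_{m}$ is red-adjacent to each $u_{i}\in V_{a}$. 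In either situation we obtain a red $K_{4}^{+}$, a contradiction. When $|S|=1$, the outside extension is forced by Fact~\ref{fact:dwi}, which guarantees $d_{r}(h_{a})\ge 1$ and hence a red $R$-edge from $V_{a}$ to some $V_{m}\notin S$.

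The main obstacle is the residual case where $|S|\ge 2$, every $V_{a}\in S$ satisfies $|V_{a}|=c_{a}$, and every part $V_{m}\notin S$ is joined to every $V_{a}\in S$ by a blue edge in $R$. Here I would merge the parts of $S$ into a single super-part $V^{*}=\bigcup_{V_{a}\in S}V_{a}$ and consider the refined partition consisting of $V^{*}$ together with all original parts outside $S$. By the residual hypothesis, every edge between $V^{*}$ and any other part is blue, while edges between two parts outside $S$ inherit their single color from $R$; so at most the two colors red and blue appear on inter-part edges, and the refined partition is a genuine Gallai-partition. It has $t-|S|+1$ parts, which is at least $2$ because $|S|\le t-1$ and strictly less than $t$ because $|S|\ge 2$, contradicting the choice of $(V_{1},\ldots,V_{t})$ as a Gallai-partition with the smallest number of parts. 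The blue statement will follow by interchanging the roles of the two colors. The step I expect to require the most care is the verification that the merged partition really is a Gallai-partition, in particular that $V^{*}$ meets every remaining part monochromatically --- this is exactly where the residual assumption that no red $R$-edge leaves $S$ is used.
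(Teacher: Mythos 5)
Your proposal is correct and follows essentially the same strategy as the paper: either a red edge leaves the parts hosting the red $K_4$, yielding a red $K_4^{+}$, or no such edge exists and one builds a coarser Gallai-partition contradicting the minimality of $t$ (the paper simply collapses to the two-part partition $\bigl(V_{j_1}\cup\cdots\cup V_{j_p},\, V(G)\setminus(V_{j_1}\cup\cdots\cup V_{j_p})\bigr)$, while you merge only the parts of $S$ into one super-part). Your restriction to the set $S$ of parts actually meeting the $K_4$ is in fact a slightly more careful rendering of the step the paper leaves implicit, namely that the outgoing red edge can be taken incident to a part containing a vertex of the $K_4$.
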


Otherwise, suppose that there is a red $K_{4}$ in
$G[V_{j_{1}}\cup\cdots\cup V_{j_{p}}]$. If there exists one red
edge in $E_{G}(V_{j_{1}}\cup\cdots\cup V_{j_{p}}, V(G)\setminus
(V_{j_{1}}\cup\cdots\cup V_{j_{p}}))$, then $G$ contains a red
$K_{4}^{+}$, a contradiction. It follows that all edges in
$E_{G}(V_{j_{1}}\cup\cdots\cup V_{j_{p}}, V(G)\setminus
(V_{j_{1}}\cup\cdots\cup V_{j_{p}}))$ are blue. Then
$(V_{j_{1}}\cup\cdots\cup V_{j_{p}}, V(G)\setminus
(V_{j_{1}}\cup\cdots\cup V_{j_{p}}))$ is a new Gallai-partition
which contradicts with that $t\geq4$ and $t$ is smallest.

By Fact~\ref{fact:dwi} and Fact~\ref{fact:red k4}, we have the following facts.
\begin{fact}\label{new}
If red is in the first $s$ colors, then every $H_{i}$ has no
red $K_3$, and the statement holds
for blue symmetrically.
\end{fact}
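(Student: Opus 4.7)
The plan is to argue by contradiction: assume some $H_i$ contains a red $K_3$ and then manufacture a red $K_4$ that lies entirely inside the union of just two parts, which will violate Fact~\ref{fact:red k4}. Almost all of the heavy lifting has already been packaged into Fact~\ref{fact:dwi} and Fact~\ref{fact:red k4}, so the argument should be very short.

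Concretely, I would start from a hypothetical red triangle $T \subseteq V_i$ and invoke Fact~\ref{fact:dwi}, which guarantees $d_r(h_i)\ge 1$. Thus there is some $j\neq i$ with $h_j\in N_r(h_i)$, and by the definition of the reduced graph coming from a Gallai-partition, every edge of $E_G(V_i, V_j)$ is red. Picking any $v\in V_j$ then yields a red $K_4$ on the vertex set $V(T)\cup\{v\}$: the three edges of $T$ are red by assumption, and the three edges joining $v$ to $V(T)$ are red because they lie in $E_G(V_i, V_j)$. This red $K_4$ is contained in $V_i\cup V_j$, i.e.\ in a union of $p=2$ parts; since $t\ge 4$ we have $p\le t-1$, so Fact~\ref{fact:red k4} applies and delivers the contradiction.

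The blue statement will follow by swapping the roles of red and blue throughout and using the symmetric version of Fact~\ref{fact:red k4} that is already asserted there. I do not anticipate a genuine obstacle: the construction \emph{push a monochromatic triangle across a monochromatic join to obtain a monochromatic $K_4$} is exactly the situation Fact~\ref{fact:red k4} was designed to rule out, and the only point requiring care is to check $p=2\le t-1$, which is precisely why the argument was first reduced to $t\ge 4$.
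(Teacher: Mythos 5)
Your proof is correct and is exactly the argument the paper intends: the paper derives this fact directly from Fact~\ref{fact:dwi} (to get a part $V_j$ joined to $V_i$ entirely in red) and Fact~\ref{fact:red k4} (to forbid the resulting red $K_4$ in $V_i\cup V_j$, noting $2\le t-1$ since $t\ge 4$). No differences worth noting.
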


\begin{fact}\label{fact:red k3}
If red is in the first $s$ colors and $h_{i}$ is contained
in a red $K_{3}$ of $R$, then $H_{i}$ has no
red edge, and the statement holds
for blue symmetrically.
\end{fact}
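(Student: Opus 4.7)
The plan is to derive Fact~\ref{fact:red k3} as an almost immediate corollary of Fact~\ref{fact:red k4}, arguing by contradiction. Assume $h_i$ lies in a red triangle $h_i h_j h_l$ of the reduced graph $R$, and suppose toward a contradiction that $H_i = G[V_i]$ contains a red edge $uv$.

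The first step is to translate the red triangle in $R$ back into color information in $G$. Because the defining property of a Gallai-partition puts a single color on all edges between each pair of parts, the red edges $h_i h_j, h_i h_l, h_j h_l$ of $R$ force every edge of $E_G(V_i, V_j) \cup E_G(V_i, V_l) \cup E_G(V_j, V_l)$ to be red in $G$. Combined with the red edge $uv$ inside $H_i$, this means the four vertices $u, v, h_j, h_l$ span a red $K_4$ in $G$: the edge $uv$ is red by assumption, the four crossing edges $uh_j, uh_l, vh_j, vh_l$ are red by the observation just made, and $h_j h_l$ is red by the triangle hypothesis.

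The second step is to apply Fact~\ref{fact:red k4}. The red $K_4$ just produced lies inside $G[V_i \cup V_j \cup V_l]$, a union of exactly three parts; since the proof is in the regime $t \ge 4$, we have $3 \le t-1$, and Fact~\ref{fact:red k4} forbids any red $K_4$ in such a subgraph. This contradiction proves that $H_i$ has no red edge. The statement for blue is obtained by swapping the two colors throughout.

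I do not expect any substantive obstacle here: the argument is essentially a one-step deduction from Fact~\ref{fact:red k4}, and the only things to verify are the monochromaticity of cross-edges between parts of a Gallai-partition and the arithmetic inequality $3 \le t-1$ that legitimizes the appeal to Fact~\ref{fact:red k4}.
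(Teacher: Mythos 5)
Your proof is correct and follows the same route the paper intends: the paper gives no separate argument for this fact, simply asserting that it follows from Fact~\ref{fact:red k4}, and your construction of the red $K_4$ on $\{u,v,h_j,h_l\}$ inside the three parts $V_i\cup V_j\cup V_l$ (legitimate since $t\ge 4$) is exactly the omitted one-step deduction.
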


\begin{fact}\label{fact:red edges}
If red is in the first
$s$ colors, and $H_{i}$ and $H_{j}$ both contain red edges, then the edges $E_{G}(V_{i}, V_{j})$ are blue, that is, edge $h_{i}h_{j}$ is blue in $R$, and the statement holds for blue symmetrically.
\end{fact}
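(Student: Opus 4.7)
The plan is to argue by contradiction, handling only the red case since the blue case is completely symmetric. I would suppose red lies among the first $s$ colors and that $H_i$ and $H_j$ both contain red edges, yet the edge $h_i h_j$ of the reduced graph $R$ is red, so that every edge of $G$ between $V_i$ and $V_j$ is red. Then I would pick a red edge $uv$ inside $H_i$ and a red edge $xy$ inside $H_j$. Since the four cross edges $ux$, $uy$, $vx$, $vy$ are all red (they lie in $E_G(V_i, V_j)$), together with $uv$ and $xy$ the four vertices $\{u, v, x, y\}$ induce a red $K_4$ in $G[V_i \cup V_j]$.

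This directly contradicts Fact~\ref{fact:red k4} applied with $p = 2$ and $(j_1, j_2) = (i, j)$, which is valid since $t \geq 4$ gives $p = 2 \leq t - 1$. Thus the assumption that $h_i h_j$ is red is untenable, and because $R$ uses only red and blue, $h_i h_j$ must be blue, i.e., $E_G(V_i, V_j)$ is monochromatically blue. The blue statement follows by exchanging the roles of red and blue throughout. There is no substantive obstacle: the heavy lifting has already been done in Fact~\ref{fact:red k4}, and the production of a red $K_4$ from one red edge in each of $V_i, V_j$ together with the all-red bipartite graph between them is immediate once $h_i h_j$ is assumed red.
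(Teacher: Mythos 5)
Your proof is correct and matches the paper's intent: the paper states this fact without an explicit argument, deriving it directly from Fact~\ref{fact:red k4} (``By Fact~\ref{fact:dwi} and Fact~\ref{fact:red k4}, we have the following facts''), and your construction of a red $K_4$ on $\{u,v,x,y\}$ with $p=2$ is exactly that derivation spelled out.
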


Let $I_{r}=\{h_{i}\in V(R)$ : $H_{i}$ contains at least one red
edge$\}$ and $I_{b}=\{h_{i}\in V(R)$ : $H_{i}$ contains at least one
blue edge$\}$. Clearly by Fact~\ref{fact:red edges}, if red is in the first
$s$ colors, then the induced subgraph of $R$ by $I_{r}$ is a blue complete graph and if blue is in the first
$s$ colors, then the induced subgraph of $R$ by $I_{b}$ is a red complete graph.

\begin{fact}\label{fact:VbVr} If red and blue are both in the first $s$
colors, then $|I_{b}\cap I_{r}|\leq 1$.
\end{fact}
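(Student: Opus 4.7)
The plan is to derive a contradiction directly from Fact~\ref{fact:red edges}, applied separately to the red color and to the blue color, both of which by hypothesis lie among the first $s$ colors (so $K_4^+$ is the target in each of them).

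Suppose, for contradiction, that $|I_b \cap I_r| \ge 2$, and pick two distinct vertices $h_i, h_j \in I_b \cap I_r$. By definition, this means each of $H_i$ and $H_j$ contains at least one red edge and at least one blue edge. Since red is in the first $s$ colors and both $H_i$ and $H_j$ contain red edges, Fact~\ref{fact:red edges} forces the edge $h_i h_j$ in $R$ to be blue. On the other hand, since blue is in the first $s$ colors and both $H_i$ and $H_j$ contain blue edges, the symmetric version of Fact~\ref{fact:red edges} forces the edge $h_i h_j$ in $R$ to be red.

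These two conclusions are incompatible — the single edge $h_i h_j$ of $R$ cannot be both red and blue. Hence the assumption $|I_b \cap I_r| \ge 2$ fails, and $|I_b \cap I_r| \le 1$.

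There is no real obstacle in this step; it is a short bookkeeping consequence of Fact~\ref{fact:red edges}. The only thing to be careful about is making sure both halves of Fact~\ref{fact:red edges} are legitimately available, which requires exactly the hypothesis stated, namely that red and blue both belong to the first $s$ colors (so that a red $K_4^+$ and a blue $K_4^+$ are both forbidden configurations in $G$).
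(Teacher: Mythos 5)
Your proof is correct and is essentially the paper's argument in a light disguise: the paper takes the same two vertices $h_i,h_j\in I_b\cap I_r$, notes $h_ih_j$ is red or blue, and derives a red or blue $K_4$ in $G[V_i\cup V_j]$ contradicting Fact~\ref{fact:red k4}, whereas you invoke Fact~\ref{fact:red edges} twice (once for each color) to conclude the edge must be simultaneously blue and red --- and Fact~\ref{fact:red edges} is itself just the $K_4$ observation packaged as a lemma. Same idea, same reliance on both colors lying among the first $s$ colors; no gap.
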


Suppose, to the contrary, that $h_{i}$, $h_{j}$$\in I_{b}\cap
I_{r}$. We know that $h_{i}h_{j}$ is either red or blue. Then we
can find either a red $K_{4}$ or a blue $K_{4}$ in $G[V_{i}\cup V_{j}]$,
which contradicts with Fact~\ref{fact:red k4}.

Suppose that $|H_{i}|\geq 2$ for each $i\in [l]$ and $|H_{j}|=1$
for each $j$ with $l+1\leq j\leq t$. Then $l\geq1$. Otherwise,
$R=G$. Then $G$ is 2-edge colored, which contradicts with that
$k\geq3$. Let $p_{0}$ be the number of $H_{i}$($i\in [l]$) which
contains neither red nor blue edges, $p_{1}$ the number of $H_{i}$
which contains either red or blue edges and $p_{2}$ the number of
$H_{i}$ which contains both red and blue edges. Then
$l=p_{0}+p_{1}+p_{2}$, $p_{2}=|I_{b}\cap I_{r}|$ and
$p_{1}+p_{2}=|I_{b}\cup I_{r}|$.

\begin{case}\label{case:s>=1}
Exactly one of red and blue is in the first $s$ colors and the other is in the last $k-s$ colors. (so $s\leq k-1$).
\end{case}

W.L.O.G., suppose that red appears in the first $s$ colors, blue
appears in the last $k-s$ colors and red is the $s$th color. This
means that there is neither red $K_{4}^{+}$ nor blue $K_{3}$ in
$G$. Then by
Fact~\ref{fact:dwi}, we can get that every $H_{i}$ contains no
blue edge, which implies that $p_{2}=0$. Since $R(K_{4}^{+}, K_{3})=9$, we have that $t\leq8$.

\begin{claim}\label{claim:p1}
$p_{1}\leq 2$.
\end{claim}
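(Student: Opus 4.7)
The plan is straightforward given the structural facts already accumulated. Recall that in Case~\ref{case:s>=1} blue lies in the last $k-s$ colors, so $G$ contains no blue $K_3$, and the excerpt has just established that every $H_i$ is blue-edge free, so $I_b = \emptyset$ and $p_2 = 0$. Consequently $p_1 = |I_b \cup I_r| = |I_r|$, i.e.\ $p_1$ counts exactly the parts $H_i$ with $|H_i|\ge 2$ that contain at least one red edge.

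Next I would appeal to Fact~\ref{fact:red edges}. Since red is in the first $s$ colors, whenever two distinct parts $H_i$ and $H_j$ both contain a red edge, the cross edges $E_G(V_i,V_j)$ must all be blue, so the reduced edge $h_ih_j$ is blue in $R$. Therefore the induced subgraph of $R$ on the vertex set $\{h_i : h_i \in I_r\}$ is a blue clique.

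Finally, suppose for contradiction that $p_1 = |I_r| \ge 3$. Pick any three vertices $h_i, h_j, h_\ell \in I_r$; the previous paragraph shows the triangle $h_ih_jh_\ell$ is entirely blue in $R$, producing a blue $K_3$ in $G$, contradicting the hypothesis of Case~\ref{case:s>=1}. Hence $p_1 \le 2$.

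I do not anticipate a genuine obstacle here: the claim is an immediate corollary of Fact~\ref{fact:red edges} combined with the blue-$K_3$-free hypothesis, and its main purpose is bookkeeping for the vertex-counting estimates that follow. The only point to be careful about is not to confuse $p_1$ with $|I_r \cup I_b|$ in a setting where $I_b$ might contribute---but since blue edges inside parts have already been ruled out in Case~\ref{case:s>=1}, this reduction is clean.
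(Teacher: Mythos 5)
Your argument is correct and is essentially identical to the paper's proof: both take three parts containing red edges, apply Fact~\ref{fact:red edges} to conclude that the corresponding reduced triangle is entirely blue, and derive a contradiction with the absence of a blue $K_3$ in Case~\ref{case:s>=1}. The preliminary bookkeeping that $p_2=0$ and $p_1=|I_r|$ matches what the paper establishes just before stating the claim.
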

Otherwise, suppose that $p_{1}\geq 3$. Then W.L.O.G., suppose that $H_{1}$, $H_{2}$ and $H_{3}$ are three corresponding subgraphs which contain red edges.
By Fact~\ref{fact:red edges}, $R[\{h_{1}, h_{2}, h_{3}\}]$ is a blue $K_{3}$. So $G$ has a blue $K_{3}$, a contradiction.

\begin{claim}\label{claim:G}
$$
|G|\leq\left(\frac{1}{8}p_{0}+\frac{5}{16}p_{1}\right)f(k, s)+(t-l).
$$
\end{claim}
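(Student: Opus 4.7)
The plan is to write $|G| = \sum_{i=1}^{t} |H_{i}|$ and separately bound the contribution of the three kinds of parts: the $p_{0}$ parts in $[l]$ whose $H_{i}$ contains neither red nor blue edges, the $p_{1}$ parts in $[l]$ whose $H_{i}$ contains a red edge, and the $t-l$ singleton parts each contributing $1$. In the current case we have already observed $p_{2}=0$, since no $H_{i}$ contains a blue edge (together with the blue edges joining $V_{i}$ to some blue neighbour guaranteed by Fact~\ref{fact:dwi}, such an edge would create a blue $K_{3}$). So the first $l$ parts split cleanly into the $p_{0}$ and $p_{1}$ families, and all that remains is to bound $|H_{i}|$ in each family by applying the induction hypothesis on $s+k$.

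For a $p_{0}$ part, the coloring on $H_{i}$ uses at most $k-2$ colors (all colors except red and blue), with $s-1$ of them required to avoid $K_{4}^{+}$ and $k-s-1$ required to avoid $K_{3}$. Since $(s-1)+(k-2)<s+k$, the induction hypothesis yields
$$|H_{i}| \le gr_{k-2}\bigl(K_{3}:(s-1)\cdot K_{4}^{+},\,(k-s-1)\cdot K_{3}\bigr)-1 = f(k-2,s-1),$$
and inequality~(4) bounds this by $\tfrac{1}{8}f(k,s)$.

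For a $p_{1}$ part, $H_{i}$ uses at most $k-1$ colors (all colors except blue). The decisive input is Fact~\ref{new}: since red lies in the first $s$ colors, $H_{i}$ has no red $K_{3}$. Consequently, inside $H_{i}$, red must avoid $K_{3}$ rather than merely $K_{4}^{+}$, and the problem on $H_{i}$ reduces to a Gallai $(k-1)$-coloring with $s-1$ colors avoiding $K_{4}^{+}$ and $k-s$ colors (the original $k-s-1$ together with red) avoiding $K_{3}$. The induction hypothesis then gives
$$|H_{i}| \le gr_{k-1}\bigl(K_{3}:(s-1)\cdot K_{4}^{+},\,(k-s)\cdot K_{3}\bigr)-1 = f(k-1,s-1),$$
which by inequality~(3) is at most $\tfrac{5}{16}f(k,s)$. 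Adding up the contributions of the $p_{0}$, $p_{1}$ and singleton parts delivers the claimed bound. The only subtle point is this ``demotion'' of red from $K_{4}^{+}$-avoidance to $K_{3}$-avoidance inside every $p_{1}$ part, which is exactly what Fact~\ref{new} supplies; without it a direct application of the induction hypothesis on $k-1$ colors would give only a $\tfrac{1}{2}$-type factor, and the coefficients would be too weak to derive the contradiction that this claim is designed to set up.
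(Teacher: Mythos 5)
Your proof is correct and follows essentially the same route as the paper: the same split into $p_{0}$, $p_{1}$ and singleton parts (with $p_{2}=0$ from the absence of blue edges inside parts), the same applications of the induction hypothesis giving $f(k-2,s-1)$ and $f(k-1,s-1)$, and the same use of Fact~\ref{new} together with inequalities (3) and (4). Your remark on the necessity of ``demoting'' red to $K_{3}$-avoidance in the $p_{1}$ parts is exactly the point the paper's proof relies on.
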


\begin{proof}
First let $i\in [l]$ and $H_{i}$ contain no red edges. Then $H_{i}$ is colored with exactly $k-2$ colors and satisfies that it contains neither monochromatic $K_{4}^{+}$ in one of the first $s-1$ color nor monochromatic $K_{3}$ in one of the remaining $k-s-1$ colors.
Hence by the induction hypothesis and (4),
\begin{eqnarray*}
|H_{i}|
&\leq gr_{k-2}(K_{3}:(s-1)\cdot K_{4}^{+}, (k-s-1)\cdot K_{3})-1\\
&= f(k-2, s-1)
\leq\frac{1}{8}~f(k, s)\label{con:1}.
\end{eqnarray*}
Next suppose that $H_{i}$ contains a red edge.
Then $H_{i}$ is colored with exactly $k-1$ colors and by Fact~\ref{new}, $H_{i}$ contains neither monochromatic $K_{4}^{+}$ in one of the first $s-1$ colors nor monochromatic $K_{3}$ in one of the remaining $k-s$ colors.
Hence by the induction hypothesis and (3),
\begin{eqnarray*}
|H_{i}|\leq gr_{k-1}(K_{3}:(s-1)\cdot K_{4}^{+}, (k-s)\cdot K_{3})-1=f(k-1, s-1)\leq\frac{5}{16}~f(k, s)\label{con:2}.
\end{eqnarray*}
Therefore by the above inequalities, we get that
\begin{eqnarray*}
|G|\leq\left(\frac{1}{8}p_{0}+\frac{5}{16}p_{1}\right)f(k, s)+(t-l).
\end{eqnarray*}
\end{proof}

We now consider subcases based on the values of $l$ and $t$.
\begin{subcase}\label{case:t<=5}
$t\leq 5$.
\end{subcase}
By Claim~\ref{claim:p1}, $p_1\le 2$. Since $p_0+p_1=l\le t$ and by
Claim~\ref{claim:G}, we have that
\begin{eqnarray*}
|G|\leq\left(\frac{1\times3}{8}+\frac{5\times2}{16}\right)f(k, s)
 < n,
\end{eqnarray*} a contradiction.

\begin{subcase}\label{case:l<=3}
$l\leq 2$ and $6\leq t\leq 8$.
\end{subcase}
Then by Claim~\ref{claim:p1} and Claim~\ref{claim:G}, we can get
that
\begin{eqnarray*}
|G|
&\leq&\left(\frac{1\times0}{8}+\frac{5\times2}{16}\right)f(k, s)+(t-2)\\
&\leq&\frac{5}{8}f(k, s)+6\\
&\leq&f(k, s) < n,
\end{eqnarray*} a contradiction.

\begin{subcase}\label{case:l>=4}
$l\geq 3$ and $6\leq t\leq 8$.
\end{subcase}

\begin{claim}\label{claim:3p1}
In this case, $p_{1}\leq1$. Further, if $t\ge 7$, then $p_{1}=0$.
\end{claim}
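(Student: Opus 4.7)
The plan is to rule out the two forbidden scenarios separately: $p_1 = 2$ for all $t$ with $6 \le t \le 8$, and $p_1 = 1$ for $t \in \{7, 8\}$, by exploiting how little red or blue structure $R$ can afford to carry.

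For the first half I would suppose $p_1 = 2$ with $H_1$ and $H_2$ both containing red edges; by Fact~\ref{fact:red edges} the edge $h_1 h_2$ in $R$ is then blue. Set $A = \{h_i : i \ge 3,\ h_1 h_i \text{ is red in } R\}$ and $B = \{h_i : i \ge 3,\ h_2 h_i \text{ is red in } R\}$. Since no $h_i$ with $i \ge 3$ may be blue-adjacent to both $h_1$ and $h_2$ (this would give a blue $K_3$), I obtain $A \cup B \supseteq \{h_3, \ldots, h_t\}$. The key step is to prove $|A| \le 2$: fixing a red edge $uv$ inside $H_1$, if two vertices $h_i, h_j \in A$ had a red edge in $R$ between them, then $\{u, v, h_i, h_j\}$ would form a red $K_4$ sitting in the three parts $V_1 \cup V_i \cup V_j$, which is ruled out by Fact~\ref{fact:red k4} because $t \ge 4$. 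Hence all edges inside $A$ are blue, which forces $|A| \le 2$ (no blue $K_3$ in $R$), and symmetrically $|B| \le 2$.

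This already settles $t \in \{7, 8\}$ because $|A \cup B| \le 4 < t - 2$. For the residual case $t = 6$ I would observe that the bounds force $|A| = |B| = 2$ and $A \cap B = \emptyset$; writing $A = \{h_3, h_4\}$ and $B = \{h_5, h_6\}$, the edges $h_1 h_5$ and $h_1 h_6$ are blue (because $h_5, h_6 \notin A$) while $h_5 h_6$ is blue since $B$ induces a blue clique, producing a blue $K_3$ on $\{h_1, h_5, h_6\}$, contradiction. This gives $p_1 \le 1$ throughout the subcase.

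For the stronger conclusion $p_1 = 0$ when $t \ge 7$, I would assume $p_1 = 1$ with a red edge $uv \in H_1$. The identical red-$K_4$ argument shows that any two red neighbors of $h_1$ in $R$ must be joined by a blue edge, so the red neighborhood of $h_1$ induces a blue clique and $d_r(h_1) \le 2$ follows from the absence of a blue $K_3$. Hence $d_b(h_1) \ge t - 3 \ge 4$, and the blue neighborhood of $h_1$ is forced to induce a red clique, yielding a red $K_4$ in $R$ lying inside $4 \le t-1$ parts, violating Fact~\ref{fact:red k4} once more. The main obstacle in this plan is the tight sub-case $t = 6$, $p_1 = 2$: the naive counting $|A| + |B| \le 4$ no longer improves on $t - 2$, so one has to carefully extract a blue $K_3$ from the complementary triple $\{h_1, h_5, h_6\}$, combining the blue-clique structure of $B$ with the exclusion $h_5, h_6 \notin A$.
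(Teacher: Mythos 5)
Your proof is correct. The second half (ruling out $p_1=1$ when $t\ge 7$) is essentially the paper's own argument: $N_r(h_1)$ must induce a blue clique, so $d_r(h_1)\le 2$, and then the blue neighbourhood of $h_1$, of size at least $t-3\ge 4$, induces a red clique, giving a red $K_4$ that violates Fact~\ref{fact:red k4}. The first half is organized differently, though it uses the same two facts. The paper shows that at most one edge of $\{h_1h_i:3\le i\le t\}$ is blue (two blue edges would force either a blue triangle through $h_1$ or a red triangle through $h_2$, the latter contradicting Fact~\ref{fact:red k3}), so $h_1$ has at least $t-3\ge 3$ red neighbours among $h_4,\dots,h_t$, and the edges among three of them yield either a red triangle through $h_1$ or a blue $K_3$; this disposes of $t=6,7,8$ uniformly. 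You instead bound the restricted red neighbourhoods $|A|,|B|\le 2$ and use the covering $A\cup B\supseteq\{h_3,\dots,h_t\}$, which kills $t\ge 7$ immediately but leaves the tight case $t=6$, where your extra step (forced disjointness $A\cap B=\emptyset$, $h_1h_5,h_1h_6$ blue since $h_5,h_6\notin A$, and $h_5h_6$ blue since $B$ induces a blue clique, giving the blue triangle $h_1h_5h_6$) is carried out correctly. A small simplification: your red-$K_4$ step (a red edge of $H_1$ together with two red-adjacent red neighbours of $h_1$) is exactly Fact~\ref{fact:red k3} applied to the red triangle $h_1h_ih_j$, so you could cite that fact directly rather than re-deriving it from Fact~\ref{fact:red k4}.
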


\begin{proof}
First, to the contrary, we can assume
that $H_{1}$ and $H_{2}$ both contain a red edge, and each other
$H_{i}$ contains no red edge by Claim~\ref{claim:p1}. Then by
Fact~\ref{fact:red edges}, we have that the edges
in $E_{G}(V_{1},V_{2})$ are blue. If there are two blue edges in $\{h_{1}h_{i}|\ \ 3\le i\le
t\}$, then we can find a blue triangle $h_{1}h_ph_q$($2\le p<q\le t$) which contradicts with that $G$ has no blue $K_{3}$ or find a red $K_3$ which is $h_2h_ph_q$($3\le p<q\le t$), which contradicts with Fact~\ref{fact:red k3}.
 So we can assume that all edges in
$\{h_{1}h_{i}|\ \ 4\le i\le t\}$ are red. It follows that either
we can find a red triangle $h_1h_ph_q$($4\le p<q\le
t$) which contradicts with Fact~\ref{fact:red k3}, or we can find
a blue $K_3$ which is $h_4h_5h_6$, a contradiction. Secondly, let
$t\ge 7$. To the contrary, we can assume that
$H_{1}$ contains a red edge and each other $H_{i}$ contains no red
edge. Then by Fact~\ref{fact:red k3} and since $G$ has no blue
$K_{3}$, there are at most two red edges in $\{h_{1}h_{i}|\ \ 2\le
i\le t\}$. So we can assume that all edges in $\{h_{1}h_{i}|\ \
4\le i\le t\}$ are blue. It follows that either we can find a blue
$K_3$ which is $h_1h_ph_q$, where $4\le p<q\le t$, a
contradiction, or we can find a red $K_4$ induced by
$\{h_4,h_5,h_6,h_7\}$, which contradicts with Fact~\ref{fact:red
k4}.
\end{proof}

Thus, if $t=6$, then $p_{1}\leq1$. By Claim~\ref{claim:G}, we have that
\begin{eqnarray*}
|G|
&\leq&\left(\frac{5\times1}{16}+\frac{l-1}{8}\right)f(k, s)+(6-l)\\
&=&\frac{2l+3}{16}f(k, s)+(6-l)<f(k, s)<n,
\end{eqnarray*} a contradiction.

If $ t\ge 7$, then by Claim~\ref{claim:3p1} and
Claim~\ref{claim:G}, we have that $p_{1}=0$ and
\begin{eqnarray*}
|G|\leq \frac{l}{8}f(k, s)+(t-l)
\leq f(k, s)
 < n,
\end{eqnarray*} a contradiction. The proof of Case~\ref{case:s>=1} is completed.

\begin{case}\label{case:s>=2}
Red and blue are both in the first $s$ colors (so $s\geq 2$).
\end{case}
This means that there exists neither red $K_{4}^{+}$ nor blue $K_{4}^{+}$ in
$G$. W.L.O.G., suppose that red and blue are the $(s-1)$th and
$s$th color, respectively. Since $R(K_{4}^{+}, K_{4}^{+})=18$, we know that
$t\leq 17$. Since $s\ge 2$ and $k\ge 3$, $f(k, s)\ge 35$. First we prove some claims.

\begin{claim}\label{claim:drbwi}
For any $h_{i} \in V(R)$, we have $d_{r}(h_{i})\leq8$ and $d_{b}(h_{i})\leq8$ in $R$.
\end{claim}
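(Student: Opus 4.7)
The plan is to establish the bound via a single Ramsey-type dichotomy, after first observing that $R$ cannot contain a monochromatic $K_4$ in either color. I do not expect any real obstacle: the key point is that the minimality of $t$, via Fact~\ref{fact:red k4}, forces $R$ to be $K_4$-free in both colors, which reduces the red-neighborhood bound on $h_i$ to the Ramsey number $R(K_3,K_4^+)=9$.

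Suppose, for contradiction, that $d_r(h_i)\ge 9$ for some $i$; then $t\ge 10$ and in particular $t\ge 5$. I first show that $R$ contains neither a red $K_4$ nor a blue $K_4$. Indeed, any red $K_4$ in $R$ on vertices $h_{j_1},\ldots,h_{j_4}$ would lift to a red $K_4$ in $G[V_{j_1}\cup\cdots\cup V_{j_4}]$ by selecting one vertex per part, contradicting Fact~\ref{fact:red k4} with $p=4\le t-1$; the blue case is symmetric.

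Now $R[N_r(h_i)]$ is a $2$-colored complete graph on at least $9$ vertices, so by $R(K_3,K_4^+)=9$ in Lemma~\ref{Lem:k4+K3} it contains either a red $K_3$ or a blue $K_4^+$. A red $K_3$ on three vertices of $N_r(h_i)$, together with $h_i$, would form a red $K_4$ in $R$, contradicting the previous paragraph. A blue $K_4^+$ in $R\subseteq G$ directly contradicts the standing assumption of Case~\ref{case:s>=2} that $G$ has no blue $K_4^+$. Hence $d_r(h_i)\le 8$, and the bound $d_b(h_i)\le 8$ follows by the symmetric argument, applying $R(K_4^+,K_3)=9$ to $R[N_b(h_i)]$: a red $K_4^+$ is directly forbidden in $G$, while a blue $K_3$ combined with $h_i$ would yield a blue $K_4$ in $R$, again contradicting the previous paragraph.
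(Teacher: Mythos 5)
Your proof is correct and follows essentially the same route as the paper's: apply $R(K_3,K_4^+)=9$ to $R[N_r(h_i)]$ and derive a forbidden monochromatic configuration. The only cosmetic difference is how the red-$K_3$ branch is closed: the paper promotes the resulting red $K_4$ (with apex $h_i$) to a red $K_4^+$ using one of the remaining red neighbors of $h_i$, whereas you first rule out any monochromatic $K_4$ in $R$ via Fact~\ref{fact:red k4}; both finishes are valid.
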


W.L.O.G., suppose, to the contrary, that $d_{r}(h_{i})\geq9$. Since $R(K_{3},
K_{4}^{+})=9$, then the subgraph of $R$ induced by $N_{r}(h_{i})$
contains either a red $K_{3}$ or a blue $K_{4}^{+}$. So $R$
contains a red  $K_{4}^{+}$ or blue $K_{4}^{+}$. Thus $G$ contains
also, a contradiction.

\begin{claim}\label{claim:drbwi2}
If $d_{r}(h_{i})\geq4$, then $h_{i}\notin I_{r}$, and if $d_{b}(h_{i})\geq4$, then $h_{i}\notin I_{b}$.
\end{claim}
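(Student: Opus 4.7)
The plan is to argue by contradiction: assume $h_i\in I_r$ with $d_r(h_i)\geq 4$, fix a red edge $u_1u_2\in E(H_i)$, and choose four red neighbors $h_{j_1},\ldots,h_{j_4}\in N_r(h_i)$ in $R$. The strategy is to use the red edge $u_1u_2$ inside $V_i$ together with the cross-colors of the Gallai-partition to manufacture a monochromatic $K_4^+$ in $G$, contradicting our assumption.

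First I would show $R[\{h_{j_1},\ldots,h_{j_4}\}]$ has no red edge: a red edge $h_{j_p}h_{j_q}$ would yield a red $K_4$ on $\{u_1,u_2,h_{j_p},h_{j_q}\}$ (since all cross-edges $V_i\leftrightarrow V_{j_\ell}$ are red), and adjoining any $h_{j_r}$ with $r\neq p,q$ (red-adjacent to $u_1$) would produce a red $K_4^+$, a contradiction. Hence $R[\{h_{j_1},\ldots,h_{j_4}\}]$ is a blue $K_4$; if $d_r(h_i)\geq 5$ a fifth red neighbor would give a blue $K_5\supseteq K_4^+$, so $d_r(h_i)=4$. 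To block this blue $K_4$ from extending to a blue $K_4^+$ in $G$, we must have $|V_{j_p}|=1$ for each $p$ and every $h_a\in V(R)\setminus(\{h_i\}\cup N_r(h_i))$ must be red-adjacent to all four $h_{j_p}$'s in $R$; thus $d_r(h_{j_1})=t-4$, and Claim~\ref{claim:drbwi} gives $t\leq 12$.

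Next I would analyze $R[N_b(h_i)]$. A red $K_3$ $\{h_a,h_b,h_c\}$ there, combined with $h_{j_1}$ and $h_{j_2}$ (both red to $h_a,h_b,h_c$ but blue to each other), yields a red $K_4^+$; a blue $K_3$ $\{h_a,h_b,h_c\}$ there, combined with $u_1$ and $u_2$ (both blue to $h_a,h_b,h_c$ via the Gallai-partition), yields a blue $K_4^+$ on $\{u_1,h_a,h_b,h_c,u_2\}$. Hence $R[N_b(h_i)]$ is triangle-free in both colors, and $R(K_3,K_3)=6$ forces $|N_b(h_i)|\leq 5$, so $t\leq 10$.

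The hard part will be handling the residual cases $t\leq 10$, especially $|N_b(h_i)|=5$ where $R[N_b(h_i)]$ must be the unique $C_5+C_5$ Paley coloring of $K_5$. For these I would pick a blue edge $h_ah_b$ in $R[N_b(h_i)]$ and examine the Gallai triangle $\{u_1,u_2,h_i\}$ inside $H_i$: Fact~\ref{new} forbids this triangle from being red, so the no-rainbow constraint together with $u_1u_2$ red severely restricts the colors of $u_1h_i$ and $u_2h_i$. A sub-case analysis shows that whenever one of these edges is red or blue, a short argument in the configuration $\{u_1,u_2,h_i,h_a,h_b\}$ or $\{u_1,u_2,h_i,h_{j_1},h_{j_2}\}$ directly produces a monochromatic $K_4^+$; the remaining sub-cases with third colors, together with the smaller residual cases $|N_b(h_i)|<5$, I would eliminate by the induction hypothesis bounds on $|V_i|$ and each $|V_a|$ combined with the inequalities (3)--(5), yielding $|G|<n$. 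The symmetric statement for $I_b$ follows by swapping red and blue throughout.
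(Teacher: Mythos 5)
Your first two reductions are sound and essentially reproduce the paper's dichotomy: a red edge inside $R[N_{r}(h_{i})]$ together with the red edge $u_1u_2$ of $H_i$ forces a red $K_4^{+}$ (the paper reaches the same contradiction via Fact~\ref{fact:red k3}), and otherwise $R[\{h_{j_1},\dots,h_{j_4}\}]$ is a blue $K_4$, so $G[V_{j_1}\cup\cdots\cup V_{j_4}]$ contains a blue $K_4$. But at that point your argument does not close, and the idea you are missing is Fact~\ref{fact:red k4}, which is exactly the tool the paper prepared for this moment: a monochromatic $K_4$ in red or blue inside a union of at most $t-1$ parts is already a contradiction, because either some cross-edge to the remaining parts carries that colour (yielding a monochromatic $K_4^{+}$), or all such cross-edges carry the other colour, which produces a two-part Gallai partition and contradicts the minimality of $t\ge 4$. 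Since $N_{r}(h_{i})$ excludes $V_i$, your blue $K_4$ lies in at most $t-1$ parts and the claim is finished in one line.

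Because you never invoke the minimality of the partition, you are forced to try to extend the blue $K_4$ to a blue $K_4^{+}$ by hand, and this drives the long residual analysis ($d_{r}(h_{i})=4$, all $|V_{j_p}|=1$, $R[N_{b}(h_{i})]$ triangle-free in both colours, $t\le 10$, the pentagon colouring of $K_5$, and so on). The steps you actually verify there are correct, but the cases you defer --- ``a sub-case analysis shows \dots'' and ``I would eliminate by the induction hypothesis bounds \dots yielding $|G|<n$'' --- are precisely the hard part, and they are not carried out. The final step is also conceptually shaky: the colours of $u_1h_i$ and $u_2h_i$ inside $H_i$ (note that $h_i$ might even coincide with $u_1$ or $u_2$) are not governed by the reduced graph, and falling back on the global count $|G|<n$ to establish this local structural claim would amount to redoing most of Case~\ref{case:s>=2}, since the claim is itself an ingredient of those counting bounds. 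As written, the proposal has a genuine gap.
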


Suppose that $d_{r}(h_{i})\geq4$. To the contrary, suppose that
$H_{i}$ contains a red edge. If the induced subgraph
$R[N_{r}(h_{i})]$ contains a red edge, say $h_{p}h_{q}$, then we
find a red $K_3$ which is $h_{i}h_{p}h_{q}$, which contradicts
with Fact~\ref{fact:red k3}. Otherwise, $R[N_{r}(h_{i})]$ contains
a blue $K_{4}$. So $G[\bigcup_{h_p\in N_{r}(h_{i})}V_{p}]$ contains
a blue $K_{4}$, which contradicts with Fact~\ref{fact:red k4}. So we have
that $h_{i}\notin I_{r}$. The proof for blue is as same as the above one for red
symmetrically.

\begin{claim}\label{claim:Vb+Vr}
$|I_{b}|+|I_{r}|\leq 4$.
\end{claim}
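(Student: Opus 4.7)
The plan is to argue by contradiction, assuming $|I_r|+|I_b|\ge 5$ and extracting a monochromatic $K_4$ that lies in a union of at most $t-1$ parts, which contradicts Fact~\ref{fact:red k4}. The first step is to establish the individual bounds $|I_r|\le 3$ and $|I_b|\le 3$. If $|I_r|\ge 4$, then by Fact~\ref{fact:red edges} the induced subgraph $R[I_r]$ is a blue clique of size at least $4$, producing a blue $K_4$ on representatives of four parts in $I_r$. The only way this does not immediately contradict Fact~\ref{fact:red k4} is $|I_r|=t$, which forces $R$ to be monochromatic blue and contradicts the two-color assumption on $R$; the bound for $|I_b|$ is symmetric.

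Having reduced to $|I_r|,|I_b|\le 3$, I would split on the overlap $|I_r\cap I_b|$, which is at most $1$ by Fact~\ref{fact:VbVr}. In the case $|I_r\cap I_b|=1$, the sum condition $|I_r|+|I_b|\ge 5$ forces one of the two sets to have size $3$; by the red/blue symmetry of Case~\ref{case:s>=2}, I may assume $I_r=\{a,b,c\}$ with $c$ the common element. The edges $ab,ac,bc$ are blue in $R$ by Fact~\ref{fact:red edges}, and since $c\in I_b$ the subgraph $H_c$ contains a blue edge $xy$. Then $\{h_a,h_b,x,y\}$ spans a blue $K_4$ in $G[V_a\cup V_b\cup V_c]$, i.e., in a union of only three parts, contradicting Fact~\ref{fact:red k4}.

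In the disjoint case $|I_r\cap I_b|=0$, the remaining possibilities are $(|I_r|,|I_b|)\in\{(3,3),(3,2),(2,3)\}$. Here I plan a double-counting argument on the bipartite block of $R$-edges between $I_r$ and $I_b$. For each $h_x\in I_r$, at most one of its edges to $I_b$ can be red: two red edges $xy_1,xy_2$ combined with the red edge inside $V_x$ (which exists since $x\in I_r$) and the red edge $y_1y_2$ coming from $R[I_b]$ (by Fact~\ref{fact:red edges}) would yield a red $K_4$ in $G[V_x\cup V_{y_1}\cup V_{y_2}]$, contradicting Fact~\ref{fact:red k4}. Symmetrically, each $h_y\in I_b$ has at most one blue edge to $I_r$. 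Counting blue edges in the block by rows gives at least $|I_r|(|I_b|-1)$, while by columns it is at most $|I_b|$; the needed inequality $|I_r|(|I_b|-1)\le|I_b|$ fails in each of the three cases ($3\cdot 2>3$, $3\cdot 1>2$, $2\cdot 2>3$).

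The main obstacle I anticipate is the bookkeeping: for each $K_4$ I construct, the number of parts it occupies must be strictly less than $t$ for Fact~\ref{fact:red k4} to apply, so at each step I must verify $t\ge|I_r\cup I_b|$ together with $t\ge 4$. Once those checks are in place, the numerical inequalities are all sharp enough for every case to close cleanly.
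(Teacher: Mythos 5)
Your proposal is correct and follows essentially the same route as the paper: both arguments use Fact~\ref{fact:red edges} to make $R[I_r]$ a blue clique and $R[I_b]$ a red clique, bound each set by $3$ via Fact~\ref{fact:red k4}, treat the overlap separately via Fact~\ref{fact:VbVr}, and then derive the contradiction from the bipartite $R$-edges between $I_r$ and $I_b$. The only cosmetic differences are that the paper handles the case $I_r\cap I_b\neq\emptyset$ by showing directly that both sets then have size at most $2$, and closes the disjoint case with a pigeonhole on the $3\times 2$ block where you use an equivalent double count; your extra care about the $p\le t-1$ hypothesis in Fact~\ref{fact:red k4} is sound and, if anything, slightly more scrupulous than the paper's own wording.
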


If there is a vertex $h_{i}\in I_{b}\cap I_{r}$, then by
Fact~\ref{fact:red k3}, $h_{i}$ is contained in neither a red
$K_{3}$ nor a blue $K_{3}$ in $R$. By Fact~\ref{fact:red edges},
we know that the induced subgraph of $R$ by $I_{r}$ is a blue
complete graph and the induced subgraph of $R$ by $I_{b}$ is a red
complete graph. It follows that $|I_{b}|\leq 2$ and $|I_{r}|\leq
2$.

Now we can assume that $I_{b}\cap I_{r}=\emptyset$ by
Fact~\ref{fact:VbVr}. To the contrary, suppose that
$|I_{b}|+|I_{r}|\ge 5$. If $|I_{b}|\geq 4$, then by
Fact~\ref{fact:red edges}, the subgraph $R[I_{b}]$ contains a red
$K_{4}$, which contradicts with Fact~\ref{fact:red k4}. Then
$|I_{b}|\leq 3$. By the same reasons, we know that $|I_{r}|\leq
3$.  Then $|I_{b}|=3$ or $|I_{r}|=3$. W.L.O.G., suppose that
$|I_{b}|=3$. Then $|I_{r}|\ge 2$. Let $I_{b}=\{h_1, h_2, h_3\}$
and $h_4, h_5\in I_{r}$. By Fact~\ref{fact:red edges}, $h_1h_2h_3$
is a red triangle and $h_4h_5$ is a blue edge in $R$. It is easy
to check that there exists a red triangle $h_ph_ih_j$ such that $p\in
\{4, 5\}$ and $i,j\in [3]$ or a blue $K_3=h_ih_4h_5$ such that
$i\in [3]$, which contradicts with Fact~\ref{fact:red k3}.

\begin{claim}\label{claim:G2}
$$
|G|\leq \left(\frac{1}{17}p_{0}+\frac{5}{2\times17}p_{1}+\frac{5}{17}p_{2}\right)f(k, s)+(t-l).
$$
\end{claim}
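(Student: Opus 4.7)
The plan is to bound $|H_i|$ for each $i\in[l]$ according to which of the two colors red and blue appear among the edges of $H_i$, then sum (the singletons $V_{l+1},\dots,V_{t}$ contribute $t-l$). I will partition $[l]$ into the three classes of sizes $p_{0},p_{1},p_{2}$ already defined, and apply the induction hypothesis (on $k+s$) three times, once per class, with different bookkeeping of which colors behave as $K_{4}^{+}$-colors and which behave as $K_{3}$-colors inside $H_{i}$.

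For a part of type $p_{0}$, $H_{i}$ uses at most $k-2$ colors, and the hypothesis on $G$ forces $H_{i}$ to contain no $K_{4}^{+}$ in any of the $s-2$ remaining first-category colors and no $K_{3}$ in any of the $k-s$ last-category colors; since $(k-2)+(s-2)<k+s$, the induction hypothesis gives $|H_{i}|\le f(k-2,s-2)$. For a part of type $p_{1}$, assume without loss of generality that red appears but blue does not. By Fact~\ref{new}, $H_{i}$ contains no red $K_{3}$, so inside $H_{i}$ the color red is forced to behave like a last-category color; applying the induction at $(k-1,s-2)$ (one fewer color since blue is absent, and one $K_{4}^{+}$-color ``demoted'' to a $K_{3}$-color since red is now triangle-free inside $H_{i}$) yields $|H_{i}|\le f(k-1,s-2)$. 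For a part of type $p_{2}$, Fact~\ref{new} demotes both red and blue to $K_{3}$-colors, so the induction at $(k,s-2)$ gives $|H_{i}|\le f(k,s-2)$. All three sets of parameters $(k',s')$ satisfy $k'+s'<k+s$ and $0\le s'\le k'$, so the induction is legitimate.

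The remaining step is to verify the three numeric ratios
\[
f(k-2,s-2)=\tfrac{1}{17}\,f(k,s),\qquad f(k-1,s-2)\le \tfrac{5}{2\cdot 17}\,f(k,s),\qquad f(k,s-2)\le \tfrac{5}{17}\,f(k,s).
\]
These are a routine case analysis over the five parity branches defining $f$: the first is an exact equality in every branch (dropping two $K_{4}^{+}$-colors multiplies the closed form by $1/17$), while the second and third are equalities in the generic branches and strictly sharper in corner branches (such as $s=k$ with $k$ odd, where the formula switches between $f_{3}$, $f_{4}$, and $f_{5}$). Multiplying by $p_{0},p_{1},p_{2}$ respectively and adding the contribution $t-l$ from the singleton parts produces the claimed inequality.

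The only delicate point is the correct identification of which colors play which role inside $H_{i}$ when applying the induction hypothesis; once Fact~\ref{new} is invoked to demote red or blue from a $K_{4}^{+}$-color to a $K_{3}$-color in the appropriate cases, the parity arithmetic needed to confirm the three ratios is purely mechanical.
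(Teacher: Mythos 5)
Your proposal is correct and follows essentially the same route as the paper: the same three-way split by which of red/blue appear in $H_i$, the same demotion of a triangle-free color (via Fact~\ref{new}) from a $K_4^+$-color to a $K_3$-color before invoking the induction hypothesis at $(k-2,s-2)$, $(k-1,s-2)$, and $(k,s-2)$, and the same three ratio bounds $\frac{1}{17}$, $\frac{5}{2\cdot 17}$, $\frac{5}{17}$. The only difference is that the paper writes out the branch-by-branch ratio verifications explicitly while you defer them as mechanical, which is acceptable since they do check out.
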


\begin{proof}
First suppose that $H_{i}$($i\leq l$) contains neither red nor blue edges.
This means that $H_{i}$ is colored with exactly $k-2$ colors and satisfies that it has neither monochromatic $K_{4}^{+}$ in one of the first $s-2$ colors nor monochromatic $K_{3}$ in one of the remaining $k-s$ colors.
It is easy to check that $$\frac{f_{j}(k-2, s-2)}{f_{j}(k, s)}=\frac{1}{17},$$ for any $1\leq j\leq 5$.
Hence by the induction hypothesis,
\begin{eqnarray*}|H_{i}|\leq gr_{k-2}(K_{3}:(s-2)\cdot K_{4}^{+}, (k-s)\cdot K_{3})-1=f(k-2, s-2)=\frac{1}{17}f(k, s).\ \ \ \ \ \ \ \ \ \ \ \ \ \ \ \ \ \ \ \ \ \ \ \ \ (6)\end{eqnarray*}
Next suppose that $H_{i}$ contains no red edges but contains blue edges.
This means that $H_{i}$ is colored with exactly $k-1$ colors and satisfies that it contains neither monochromatic $K_{4}^{+}$ in one of the first $s-2$ colors nor monochromatic $K_{3}$ in one of the remaining $k-s+1$ colors by Fact~\ref{new}.
It is easy to check that $$\frac{f_{2}(k-1, s-2)}{f_{1}(k, s)}=\frac{f_{3}(k-1, s-2)}{f_{3}(k, s)}=\frac{f_{5}(k-1, s-2)}{f_{4}(k, s)}=\frac{2}{17},$$ $$\frac{f_{1}(k-1, s-2)}{f_{2}(k, s)}=\frac{f_{4}(k-1, s-2)}{f_{5}(k, s)}=\frac{5}{2\times17},$$
So by the induction hypothesis,
\begin{eqnarray*}|H_{i}|\leq gr_{k-1}(K_{3}:(s-2)\cdot K_{4}^{+}, (k-s+1)\cdot K_{3})-1=f(k-1, s-2)\leq\frac{5}{2\times17}f(k, s)\label{con:4}. \ \ \ \ \ \ \ \ \ \ \ (7)\end{eqnarray*}
The same inequality holds if $H_{i}$ contains no blue edges but contains red edges.

Finally suppose that $H_{i}$ contains both red and blue edges.
This means that $H_{i}$ is colored with all $k$ colors and satisfies that it contains neither monochromatic $K_{4}^{+}$ in one of the first $s-2$ colors nor monochromatic $K_{3}$ in one of the remaining $k-s+2$ colors by Fact~\ref{new}.
It is easy to check that $\frac{f(k, s-2)}{f(k, s)}\leq\frac{5}{17},$
so by the induction hypothesis,
\begin{eqnarray*}|H_{i}|\leq gr_{k}(K_{3}:(s-2)\cdot K_{4}^{+}, (k-s+2)\cdot K_{3})-1=f(k, s-2)\leq\frac{5}{17}f(k, s)\label{con:5}. \ \ \ \ \ \ \ \ \ \ \ \ \ \ \ \ \ \ \ \ \ \ \ \ \ \ (8)\end{eqnarray*}

Combining Inequalities(6)-(8), we have the following inequality
\begin{eqnarray*}
|G|\leq \left(\frac{1}{17}p_{0}+\frac{5}{2\times17}p_{1}+\frac{5}{17}p_{2}\right)f(k, s)+(t-l).
\end{eqnarray*}
\end{proof}

We now consider subcases based on the value of $l$ and $t$.

\begin{subcase}\label{case:13=<t<=17}
$13\leq t\leq 17$.
\end{subcase}

By Claim~\ref{claim:drbwi}, $d_{r}(h_{i})\leq8$ and $d_{b}(h_{i})\leq8$ in $R$ for any $i\in [l]$.
Since $d_{r}(h_{i})+d_{b}(h_{i})=t-1$, we have that $d_{b}(h_{i})\geq4$ and $d_{r}(h_{i})\geq4$ in $R$.
Then by Claim~\ref{claim:drbwi2}, every $H_{i}$ contains neither red nor blue edge.
So $p_{2}=p_{1}=0$. Thus $p_{0}=l$. By Claim~\ref{claim:G2}, we have that
\begin{eqnarray*}
|G|\leq \frac{l}{17}~f(k, s)+(t-l)< f(k, s)+1=n,
\end{eqnarray*}
a contradiction.

Next, we consider the case that $4\leq t\leq 12$. By
Fact~\ref{fact:VbVr}, we have that $p_{2}\leq1$.

\begin{subcase}\label{case:1=<l<=3}
$l\leq 3$.
\end{subcase}
Then $p_{1}\leq2$ if $p_{2}=1$ and $p_{1}\leq3$ if $p_{2}=0$. Hence by Claim~\ref{claim:G2}, we get
\begin{eqnarray*}
|G|
&\leq&\begin{cases}
\frac{10}{17}~f(k, s)+(t-3), &\text{if $p_{2}=1$ and $p_{1}\leq2$,}\\
\frac{15}{34}~f(k, s)+(t-3), &\text{if $p_{2}=0$ and $p_{1}\leq3$}\\
\end{cases}\\
&\leq &f(k, s)< n,
\end{eqnarray*}
a contradiction.

\begin{subcase}\label{case:4=<l<=10}
$4\leq l\leq10$.
\end{subcase}

First suppose that $p_{2}=1$. It follows that $t\leq7$. Otherwise, for any $h_{i} \in V(R)$, we have that $d_{r}(h_{i})\ge 4$ or $d_{b}(h_{i})\ge 4$ in $R$ since $d_{r}(h_{i})+d_{b}(h_{i})=t-1$. Then by
Claim~\ref{claim:drbwi2}, every $H_{i}$ contains no red
edge or contains no blue edge, which contradicts with the assumption that $p_{2}=1$. By Claim~\ref{claim:Vb+Vr}, we have that $p_{1}\leq2$. Thus, by
Claim~\ref{claim:G2}, we have that
\begin{eqnarray*}
|G|\leq \frac{14}{17}~f(k, s)< n,
\end{eqnarray*} a contradiction.

Now we assume that $p_{2}=0$. By Claim~\ref{claim:Vb+Vr}, we have that $p_{1}\leq4$.
This means that
\begin{eqnarray*}
|G|\leq \left[\frac{5}{2\times17}p_{1}+\frac{1}{17}(l-p_{1})\right]f(k, s)+(t-l)
\leq\frac{16}{17}~f(k, s)+2\leq f(k, s)
 < n,
\end{eqnarray*} a contradiction.

\begin{subcase}\label{case:l=11}
$l\ge 11$.
\end{subcase}

Then $11\le l\le t\le 12$. Hence $d_{r}(h_{i})\geq4$ or
$d_{b}(h_{i})\geq4$ for each $i\in [t]$. It follows that $p_{2}=0$
by Claim~\ref{claim:drbwi2}. If $d_{r}(h_{i})\geq4$ and
$d_{b}(h_{i})\geq4$ for each $i\in [l]$, then $p_{1}=0$ by
Claim~\ref{claim:drbwi2}. So $p_{0}=l$. By Claim~\ref{claim:G2},
we have that $$|G|\leq\frac{l}{17}f(k, s)+(t-l)\leq f(k, s)<n,$$ a
contradiction. So, W.L.O.G., we can assume that
$d_{r}(h_{1})\geq4$ and $d_{b}(h_{1})\le 3$. By
Claim~\ref{claim:drbwi}, we know that $(d_{r}(h_{1}),
d_{b}(h_{1}), l, t)\in \{(7, 3, 11, 11), (8, 2, 11, 11), (8, 3,
11, 12), (8, 3, 12, 12)\}$. Let
$\widetilde{d_{r}}(h_{1})=|N_r(h_{1})\cap \{h_{1}, \cdots,
h_{l}\}|$ and $\widetilde{d_{b}}(h_{1})=|N_b(h_{1})\cap \{h_{1},
\cdots, h_{l}\}|$. Then $(\widetilde{d_{r}}(h_{1}),
\widetilde{d_{b}}(h_{1}), l)\in \{(7, 3, 11), (8, 2, 11), (8, 3,
12)\}$. Let $F$ be the subgraph of $R$ induced by $N_r(h_{1})\cap
\{h_{1}, \cdots, h_{l}\}$. So $|F|\ge 7$. Clearly, $F$ has no red
$K_3$. Otherwise we can find a red $K_4$ obtained by a red $K_3$
in $F$ and $h_1$, which contradicts with Fact~\ref{fact:red k4}.
\begin{claim}\label{claim:FIrIb}
 $V(F)\cap I_{b}=\emptyset$, $V(F)\cap I_{r}=\emptyset$
and $h_1\notin I_r$.
\end{claim}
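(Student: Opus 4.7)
The plan is to establish the three statements in turn using two ingredients in addition to those already in the excerpt: (i) Fact~\ref{fact:red k3} in both colors, i.e., the contrapositive that $h_i\in I_r$ implies $h_i$ lies in no red $K_3$ of $R$, and symmetrically for blue; and (ii) Fact~\ref{fact:red k4} in both red and blue (both apply in Case~\ref{case:s>=2}), so taking $p=4\le t-1$ yields that $R$ contains no red $K_4$ and no blue $K_4$ on any four vertices. The properties $|F|\ge 7$ and ``$F$ has no red $K_3$'' from the preceding text will be used repeatedly.

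The statement $h_1\notin I_r$ is immediate: since $F\subseteq N_r(h_1)$, we have $d_r(h_1)\ge |F|\ge 7\ge 4$, so Claim~\ref{claim:drbwi2} yields $h_1\notin I_r$.

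For $V(F)\cap I_r=\emptyset$, I will suppose $h_j\in V(F)\cap I_r$ and derive a contradiction. Fact~\ref{fact:red k3} forbids $h_j$ from lying in a red $K_3$ of $R$; if some $h_i\in V(F)\setminus\{h_j\}$ were a red neighbor of $h_j$, then $h_1h_jh_i$ would be a red $K_3$ in $R$ (both $h_1h_j$ and $h_1h_i$ are red, using $h_i,h_j\in V(F)$), a contradiction. Hence $B:=V(F)\setminus\{h_j\}$ consists entirely of blue neighbors of $h_j$, with $|B|\ge 6$. In $R[B]$ there is no red $K_3$ (because $B\subseteq V(F)$) and no blue $K_3$ (else adjoining $h_j$ would produce a blue $K_4$ in $R$, forbidden by Fact~\ref{fact:red k4}). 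Since $R(K_3,K_3)=6\le|B|$ by Lemma~\ref{Lem:k4+K3}, this is the required contradiction.

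For $V(F)\cap I_b=\emptyset$, I will suppose $h_j\in V(F)\cap I_b$ and split $V(F)\setminus\{h_j\}$ into $A$ (red neighbors of $h_j$) and $B$ (blue neighbors of $h_j$), so $|A|+|B|\ge 6$. Because $F$ has no red $K_3$ and $h_j\in V(F)$, no two vertices of $A$ can be red-adjacent (else they would close a red $K_3$ with $h_j$ inside $F$), so $R[A]$ is a blue clique; since $R$ has no blue $K_4$, this forces $|A|\le 3$ and hence $|B|\ge 3$. The blue version of Fact~\ref{fact:red k3} tells us $h_j$ lies in no blue $K_3$, so every edge of $R[B]$ must be red; as $|B|\ge 3$, this yields a red $K_3$ in $R[B]\subseteq R[V(F)]$, contradicting the absence of a red $K_3$ in $F$. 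This third part is the main obstacle, because $h_j\in I_b$ does not directly restrict $h_j$'s red neighborhood (Claim~\ref{claim:drbwi2} acts on the other color); the trick is to exploit the internal structure of $F$ itself together with the forbidden blue $K_4$ to cap $|A|$, and then harvest a red $K_3$ from the large blue neighborhood inside $F$ via the ``not in a blue $K_3$'' property of $I_b$.
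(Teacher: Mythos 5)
Your proof is correct and rests on the same ingredients as the paper's: the absence of a red $K_3$ in $F$, Fact~\ref{fact:red k4} in both colors, $R(K_3,K_3)=6$, and the contrapositive of Fact~\ref{fact:red k3}; the paper phrases the argument as uniform degree bounds in $F$ (red degree $\le 3$, blue degree $\le 5$) yielding a blue and a red triangle through every vertex of $F$, while you run the mirror-image contradiction for a hypothetical vertex of $I_r$ or $I_b$, which is only a presentational difference. The one genuine (and welcome) shortcut is deducing $h_1\notin I_r$ directly from Claim~\ref{claim:drbwi2} via $d_r(h_1)\ge|F|\ge 4$, where the paper instead re-derives it from the red triangles through $h_1$.
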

First we claim that the red degree of $h_i$ in $F$ is at most 3
for each $h_i\in V(F)$. Otherwise, suppose that $h_i$ has at least
four red neighbors in $F$. Since $F$ has no red $K_3$, we can find
a blue $K_4$ induced by the red neighbors of $h_{i}$ in $F$, which
contradicts with Fact~\ref{fact:red k4}. Then the blue degree
of $h_i$ in $F$ is at least 3 for each $h_i\in V(F)$. It follows
that every vertex $h_i$ of $F$ is contained in a blue triangle of
$F$.
 Thus, by
Fact~\ref{fact:red k3}, for each $h_i\in V(F)$, we have that
$h_{i}\notin I_{b}$. Secondly, we claim that the blue degree of
$h_i$ in $F$ is at most 5 for each $h_i\in V(F)$. Otherwise, we
find a blue $K_3$ induced by the blue
neighbors of $h_{i}$ in $F$ since $R(K_{3}, K_{3})=6$ and $F$ has no red $K_3$. Then there
exists a blue $K_{4}$ in $R$, which contradicts with
Fact~\ref{fact:red k4}. It follows that the red degree of $h_i$ in
$F$ is at least 1 for each $h_i\in V(F)$. Hence $h_1$ and $h_i$
are contained in the same red triangle of $R$ for each vertex
$h_i$ of $F$. Thus, by Fact~\ref{fact:red k3}, $h_{1}\notin I_{r}$
and for each $h_{i}\in V(F)$, we have that $h_{i}\notin I_{r}$.

 By Claim~\ref{claim:FIrIb}, $p_{0}\geq |F|$. Now we consider the case that $(\widetilde{d_{r}}(h_{1}),
\widetilde{d_{b}}(h_{1}, l)=(8, 2, 11)$. Then $p_{0}\geq 8$. By Fact~\ref{fact:VbVr},
$p_2\le 1$.

If $p_2=0$, then by Claim~\ref{claim:G2}, we get that
\begin{eqnarray*}
|G|\leq
\left[\frac{1}{17}p_{0}+\frac{5}{2\times17}(11-p_{0})\right]f(k,
s)+(t-l) \leq\frac{31}{34}~f(k, s)+1< f(k, s)
 < n,
\end{eqnarray*} a contradiction. If $p_2=1$, we can assume that
$h_p\in I_r\cap I_b$. Then by Claim~\ref{claim:FIrIb}, $h_p\in
N_b(h_{1})$. By Fact~\ref{fact:red k4}, $h_1\notin I_b$. Then by
Claim~\ref{claim:FIrIb}, $h_1\notin I_r\cup I_b$. So
$p_{0}\geq |F|+1=9$. Then by Claim~\ref{claim:G2}, we get that
\begin{eqnarray*}
|G|\leq
\left[\frac{9}{17}+\frac{5}{2\times17}+\frac{5}{17}\right]f(k, s)+(t-l)
\leq\frac{33}{34}~f(k, s)+1< f(k, s)+1= n,
\end{eqnarray*} a contradiction.

Now we consider the remaining cases that
$(\widetilde{d_{r}}(h_{1}), \widetilde{d_{b}}(h_{1}), l)\in \{(7,
3, 11), (8, 3, 12)\}$. Then we can assume that $N_b(h_{1})\cap
\{h_{1}, \cdots, h_{l}\}=\{h_o, h_p, h_q\}$. First consider the
case that there is a blue edge spanned by vertices in $\{h_o, h_p,
h_q\}$, say $h_ph_q$. Then $h_{1}h_ph_q$ is a blue $K_{3}$. By
Fact~\ref{fact:red k3}, we have that $h_{1}, h_p, h_q\notin
I_{b}$. Then by Claim~\ref{claim:FIrIb}, $h_1\notin I_r\cup I_b$. So $p_{0}\geq |F|+1$. If $p_2=0$, then by
Claim~\ref{claim:G2}, we get that
\begin{eqnarray*}
|G|\leq
\left[\frac{1}{17}p_{0}+\frac{5}{2\times17}(l-p_{0})\right]f(k,
s)+(t-l) \leq\frac{33}{34}~f(k, s)+1< f(k, s)+1= n,
\end{eqnarray*} a contradiction. If $p_2=1$, then
$h_o\in I_r\cap I_b$ and $h_oh_p, h_oh_q$ are red. Then by
Fact~\ref{fact:red k4}, $h_p, h_q\notin I_r\cup I_b$. So $p_{0}=|F|+3$. Then by Claim~\ref{claim:G2}, we get that
\begin{eqnarray*}
|G|\leq \left[\frac{11}{17}+\frac{5}{17}\right]f(k, s)+t-l
< f(k, s)+1=n,
\end{eqnarray*} a contradiction.

Secondly, we consider the case that $h_oh_ph_q$ is a red $K_{3}$.
By Fact~\ref{fact:red k3}, $h_o, h_p, h_q\notin I_r$. Then
$p_2=0$. By Fact~\ref{fact:red k4}, for each vertex $h_i$ in $F$,
there is at least one blue edge in $E_R(h_i, \{h_o, h_p, h_q\})$.
This means that there are at least 7 blue edges between $V(F)$ and
$\{h_o, h_p, h_q\}$. By the pigeonhole principle, there is a
vertex in $\{h_o, h_p, h_q\}$, say $h_o$, such that
$|N_{b}(h_o)\cap V(F)|\ge 3$.
If the subgraph induced by $N_{b}(h_o)\cap V(F)$ contains no blue
edge, then $N_{b}(h_o)\cap V(F)$ along with $h_{1}$ induce a red
$K_{4}$ in $R$, which contradicts with Fact~\ref{fact:red k4}.
Then the subgraph induced by $N_{b}(h_o)\cap V(F)$ contains a blue
edge. So $h_o$ is contained in both a red $K_{3}$ and a blue
$K_{3}$. Thus $h_o\notin I_{b}\cup I_{r}$ by Fact~\ref{fact:red k3}. So $p_{0}\geq |F|+1$. By
Claim~\ref{claim:G2}, we have that
$$|G|\leq\left[\frac{1}{17}p_{0}+\frac{5}{2\times17}(l-p_{0})\right]f(k, s)+(t-l)\leq\frac{33}{34}~f(k, s)+1< f(k, s)+1= n,$$ a contradiction.

Complete the proof of Case~\ref{case:s>=2} and then the proof of Theorem~\ref{Thm:K4+K3}.

\end{proof}

\bibliography{bibfile1}

\end{document}